\def\multiset#1#2{\ensuremath{\left(\kern-.3em\left(\genfrac{}{}{0pt}{}{#1}{#2}\right)\kern-.3em\right)}}
\DeclarePairedDelimiter\ceil{\lceil}{\rceil}
\newcommand{\IFF}{\text{if and only if}}
\newcommand{\m}{\mathfrak{m} }
\newcommand{\depth}{\operatorname{depth}}
\newcommand{\proset}{\,\mathrel{\lower 4pt\hbox{$\scriptscriptstyle/$}
		\mkern -14mu\subseteq }\,} 
\newcommand\tsup[2][2]{%
 \def\useanchorwidth{T}%
  \ifnum#1>1%
    \stackon[-.5pt]{\tsup[\numexpr#1-1\relax]{#2}}{\scriptscriptstyle\sim}%
  \else%
    \stackon[.5pt]{#2}{\scriptscriptstyle\sim}%
  \fi%
}
\newtheorem{theorem}{Theorem}[section]
\newtheorem{corollary}[theorem]{Corollary}
\newtheorem{lemma}[theorem]{Lemma}
\newtheorem{proposition}[theorem]{Proposition}
\theoremstyle{definition}
\newtheorem{remark}[theorem]{Remark}
\newtheorem{definition}[theorem]{Definition}
\newtheorem{example}[theorem]{Example}
\title[Ratliff-Rush filtration, Hilbert coefficients and reduction number]{Ratliff-Rush filtration, Hilbert coefficients and reduction number of integrally closed ideals}
\author[Kumari Saloni and Anoot Kumar Yadav]{Kumari Saloni and Anoot Kumar Yadav}
\thanks{ The second author is supported by  a UGC fellowship, Govt. of India.}
\subjclass[2020]{13H10, 13D40, 13A30}
\keywords{Cohen-Macaulay local rings, reduction number, Ratliff-Rush filtration, Hilbert coefficients}
\address{Department of Mathematics, Indian Institute of Technology Patna, Bihta, Patna 801106, India}
\email{ksaloni@iitp.ac.in}
\address{Department of Mathematics, Indian Institute of Technology Patna, Bihta, Patna 801106, India} 
\email{anoot\_2021ma06@iitp.ac.in,\; vicky.anoot@gmail.com}
\begin{document}
\begin{abstract}
    Let $(R,\m)$ be a Cohen-Macaulay local ring of dimension $d\geq 3$ and $I$ an integrally closed $\m$-primary ideal. We establish  bounds for the third Hilbert coefficient $e_3(I)$  in terms of the lower Hilbert coefficients $e_i(I),~0\leq i\leq 2$ and  the  reduction number of $I$. When $d=3$, the boundary cases of these bounds characterize certain 
    properties of the Ratliff-Rush filtration of $I$. These properties, though weaker than $\depth G(I)\geq 1$, guarantees that  Rossi's bound for reduction number $r_J(I)$ holds in dimension three. In that context, we prove that if $\depth G(I)\geq d-3$, then $r_J(I)\leq e_1(I)-e_0(I)+\ell(R/I)+1+e_2(I)(e_2(I)-e_1(I)+e_0(I)-\ell(R/I))-e_3(I).$  We also discuss the signature of the fourth Hilbert coefficient $e_4(I).$
\end{abstract}
\maketitle
\section{Introduction}
A celebrated result of Rossi \cite{r} gives a bound for the reduction number of an $\m$-primary ideal $I$ in a Cohen-Macaulay local ring  $(R,\m)$ of dimension two. Rossi's bound is linear in terms of the multiplicity and the first Hilbert coefficient of $I$. It is an open problem to establish the same bound in higher dimensions. This paper is inspired by our interests in  extending Rossi's proof in dimension three.  We have two important observations.   Firstly, higher Hilbert coefficients  of the ideal may appear in the bound, see Theorem \ref{improvment ms result}.  Next, it is desirable that the Ratliff-Rush filtration of the ideal behaves well in some sense, see Remark \ref{re4}.


Throughout the paper, let $(R,\m)$ be a Cohen-Macaulay  local ring of dimension $d\geq 1$ with infinite residue field and $I$ an $\m$-primary ideal. 
 An $I$-admissible filtration  $\mathcal I=\{I_n\}_{n\in \mathbb Z}$ is a sequence of ideals  such that $(i)$ $I_{n+1}\subseteq I_n$,
   $(ii)$  $I_m I_n\subseteq I_{m+n}$ and $(iii)~ I^n\subseteq I_n\subseteq I^{n-k}$ for some $k\in \mathbb N$. A {\it reduction} \index{reduction} of $\mathcal I$ is an ideal $J\subseteq I_1$ such that $JI_n=I_{n+1}$ for  $n\gg 0$ and it is called a {\it minimal reduction} if it is  minimal with respect to containment among all reductions. If the residue field $R/\m$ is infinite, then  a minimal reduction of $\mathcal I$ exists and is generated by $d$ elements. Minimal reductions are important in the study of Hilbert functions and blow-up algebras. For a minimal reduction $J$ of $\mathcal I$, we define 
    \[r_J(\mathcal I)= \sup \{n\in \mathbb Z \mid I_n\not= JI_{n-1}\} \mbox{ and } 
  r(\mathcal I)=\min\{r_J(\mathcal I)~|~ J \mbox{ is a minimal reduction of } \mathcal I\}.\]
 The above numbers are known as {\it{reduction number of $\mathcal I$ with respect to $J$}} and  {\it{reduction number of $\mathcal I$}}  respectively. When $\mathcal I$ is the $I$-adic filtration $\{I^n\}_{n\in\mathbb{Z}}$,  we write $r_J(I)$ and $r(I)$ in place of $r_J(\mathcal I)$ and $r(\mathcal I)$ respectively. 
 
One of the main difficulties in extending Rossi's bound in higher dimensions is 
that  $r_J(I)$ does not behave well with respect to superficial elements. This fact is closely related to the Ratliff-Rush filtration of $I$. We recall the definition more generally for an $I$-admissible filtration $\mathcal I$. The Ratliff-Rush filtration  of $\mathcal I$ is the filtration $\{\widetilde{I_n}=\mathop\bigcup\limits_{t\geq 0}(I_{n+t}:I^t)\}_{n\in\mathbb{Z}}$. 
Let   $x\in I$ be a superficial element for $I$ and $R^\prime=R/(x)$. We say that the {\it{Ratliff-Rush filtration of $I$ behaves well modulo  $x$}} if $\widetilde{I^n}R^\prime = \widetilde{I^nR^\prime}$
 for all $n\geq 1.$  
In Remark \ref{re4}, we prove that in a three dimensional Cohen-Macaulay local ring, if the Ratliff-Rush filtration of $I$ behaves well modulo a superficial element $x\in I$, then Rossi's bound holds for $r_J(I)$. In view of this, it is natural to look for conditions, preferably computable ones, when the Ratliff-Rush filtration of $I$ behaves well.  
Hilbert coefficients provide some control over this property as we will see in this paper. In dimension three, a sufficient condition, due to Puthenpurakal \cite{Put2}, is  the vanishing of the Hilbert coefficients $e_2(I)$ and $e_3(I)$ (defined below). However, if $I$ is integrally closed, then $e_2(I)=0$ implies that $G(I)$ is Cohen-Macaulay which is a stronger condition.

The Hilbert coefficients of $\mathcal I$ are the unique integers $e_i(\mathcal I)$, $0\leq i\leq d,$ such that the function $H_{\mathcal I}(n):=\ell(R/I_n)$ coincides with the following polynomial for $n\gg 0:$  $$P_{\mathcal I}(x) =e_0(\mathcal I){x+d-1\choose d}-e_1(\mathcal{ I}){x+d-2\choose d-1}+\cdots +(-1)^de_d(\mathcal I).$$ 
Here $\ell(*)$ denotes the length function. The function $H_{\mathcal I}(n)$ and the polynomial $P_{\mathcal I}(x)$ are known as the Hilbert-Samuel function and the Hilbert-Samuel polynomial of $\mathcal I$ respectively. We refer to \cite{rv} for details.  For $\mathcal I=\{I^n\}_{n\in\mathbb{Z}}$, we write $e_i(I)$ instead of $e_i(\mathcal I)$. 
The Hilbert coefficients contain deep information about the ideal $I$, the ring $R$ and the blow-up algebras associated to $I$. 

There has been a lot of research focused at the first three coefficients, i.e., $e_i(I)$ for $0\leq i\leq 2.$ 
The behaviour of these coefficients change as soon as we encounter $e_3(I).$
For instance, Northcott \cite{north} proved that $e_1(I)\geq 0$ and Narita \cite{na} showed that $e_2(I)\geq 0$. In the same paper,  Narita showed that $e_3(I)$ could be negative.   However, $e_3(I)\geq 0$ if 
$\depth G(I)\geq d-1$ due to Marley \cite{tm} or if $I$ is a normal ideal due to Itoh \cite{it} and Huckaba-Huneke \cite{sc}. 
Later, Corso-Polini-Rossi \cite{cpr} showed that $e_3(I)\geq 0$ if $I^q$ is integrally closed for a sufficiently large $q$. When $I$ is integrally closed, Itoh \cite{itoh} proved that  $e_3(I)=0$ if $e_2(I)\leq 2.$ 
In \cite{Put2}, Puthenpurakal proved that, for  an $\m$-primary ideal  $I$, $e_3(I)\leq0$ if $e_2(I)=0$ or if $r_J(I)\leq 2$.
  Further, Mafi-Naderi  \cite {mf2}, proved that $e_3(I)\leq 0$, if $e_2(I)\leq 1$. When $I$ is integrally closed, we recover some of these results as easy consequences of our main theorems. We obtain that $e_3(I)\leq 0$ if $e_2(I)\leq 2$ and $e_3(I)\leq 2$ if $e_2(I)= 3$. Discussions on the signature and vanishing of Hilbert coefficients   can be found in  \cite{ct}, \cite{lt}, \cite{ggo}, \cite{go}, \cite{lin}, \cite{mc}, \cite{mbv}, \cite{ak} etc. 
  

Suppose $\depth G(I)\geq d-1$. Then Marley \cite{tm} proved that $e_3(I)\geq e_2(I)-e_1(I)+e_0(I)-\ell (R/I)$. The following theorem of this paper is a generalization of Marley's result. Marley's hypothesis is stronger as evident by Example \ref{ex1.2}.
\begin{theorem}\label{theorem-t1}
Let $(R,\m)$ be a Cohen-Macaulay local ring of dimension $d\geq 3$ and $I$ an $\m$-primary ideal. Suppose the Ratliff-Rush filtration of $I$ behaves well modulo a superficial sequence $x_1,\ldots,x_{d-2}\in I$. Then $e_3(I)\geq e_2(I)-e_1(I)+e_0(I)-\ell(R/I).$ 
\end{theorem}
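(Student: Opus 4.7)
The plan is to work with the Ratliff--Rush filtration $\widetilde{\mathcal I}=\{\widetilde{I^n}\}$ in place of the $I$-adic filtration. Since $\widetilde{I^n}=I^n$ for $n\gg 0$, the two filtrations share the same Hilbert polynomial, and hence $e_i(\widetilde{\mathcal I})=e_i(I)$ for $0\leq i\leq d$; moreover $I\subseteq \widetilde I$ gives $\ell(R/\widetilde I)\leq \ell(R/I)$. So it suffices to prove the slightly sharper estimate
\[
e_3(\widetilde{\mathcal I})\;\geq\; e_2(\widetilde{\mathcal I})-e_1(\widetilde{\mathcal I})+e_0(\widetilde{\mathcal I})-\ell(R/\widetilde I),
\]
and the desired inequality follows by substitution.

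This sharper estimate is nothing other than Marley's bound applied to the admissible filtration $\widetilde{\mathcal I}$. Marley's original argument for the $I$-adic filtration is a Hilbert-series computation for the associated graded ring that extends verbatim to any $I$-admissible filtration $\mathcal J$ with $\depth G(\mathcal J)\geq d-1$, yielding $e_3(\mathcal J)\geq e_2(\mathcal J)-e_1(\mathcal J)+e_0(\mathcal J)-\ell(R/J_1)$. So the whole matter reduces to proving the depth estimate $\depth G(\widetilde{\mathcal I})\geq d-1$ under the hypothesis of the theorem.

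I would establish this by induction on $d\geq 3$. A result of Puthenpurakal guarantees that for any superficial element $x\in I$ one has $(\widetilde{I^{n+1}}:x)=\widetilde{I^n}$ for every $n$; equivalently, the initial form $x_1^{*}\in G(\widetilde{\mathcal I})_{1}$ is a non-zero-divisor, and $\depth G(\widetilde{\mathcal I})\geq 1$ holds unconditionally. The Valabrega--Valla criterion then supplies a graded isomorphism
\[
G(\widetilde{\mathcal I})/x_1^{*}G(\widetilde{\mathcal I})\;\cong\;G\bigl(\{\widetilde{I^n}R'\}\bigr),\qquad R':=R/(x_1),
\]
and the hypothesis $\widetilde{I^n}R'=\widetilde{I^nR'}$ identifies the right-hand filtration with the Ratliff--Rush filtration of $IR'$ in the $(d-1)$-dimensional Cohen--Macaulay ring $R'$. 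The residual sequence $x_2,\ldots,x_{d-2}$ has length $(d-1)-2$ in $R'$ and continues to witness good behaviour of the Ratliff--Rush filtration over $R'$, so the inductive hypothesis yields $\depth G(\widetilde{IR'})\geq d-2$. Combined with the regularity of $x_1^{*}$, this produces $\depth G(\widetilde{\mathcal I})\geq d-1$, closing the induction. The base case $d=3$ is the same argument with a single element, using Puthenpurakal's result in dimension two to get depth at least one in the quotient.

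The principal obstacle is the isomorphism $G(\widetilde{\mathcal I})/x_1^{*}G(\widetilde{\mathcal I})\cong G(\widetilde{IR'})$. One side of this identification, the associated graded of the image filtration $\{\widetilde{I^n}R'\}$, comes for free from Puthenpurakal's regularity and Valabrega--Valla, but the coincidence of the image filtration with the Ratliff--Rush filtration of $IR'$ is exactly the ``behaves well'' hypothesis and would fail in general; this is the reason the theorem cannot be deduced from the unconditional depth bound $\depth G(\widetilde{\mathcal I})\geq 1$ alone. Once this identification is secured, the depth estimate and the generalized Marley inequality combine to yield the theorem immediately, and the substitution in the first paragraph finishes the proof.
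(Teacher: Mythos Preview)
Your argument is correct and follows a genuinely different route from the paper's. The paper, in the base case $d=3$, passes to the two-dimensional quotient $R'=R/(x_1)$, uses Lemma~\ref{lemma-value-of-e3} (namely $e_3(I)=\widetilde{e_3}(I')+b_I-s_I$) together with the characterisation $b_I=s_I\Leftrightarrow$ ``Ratliff--Rush behaves well mod $x_1$'' from Remark~\ref{remark-R}\eqref{remark-RR}, and then compares the explicit Huckaba--Marley expressions $\widetilde{e_3}(I')=\sum_{n\geq 2}\binom{n}{2}v_n$ and $e_2-e_1+e_0-\ell(R/I)\leq\sum_{n\geq 2}(n-1)v_n$ term by term; the higher-dimensional case is then handled by going modulo $x_1,\dots,x_{d-3}$ back to dimension three. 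You instead stay in dimension $d$, prove inductively the structural statement $\depth G(\widetilde{\mathcal I})\geq d-1$ (the $d=3$ case of which the paper records separately, after the theorem, as Remark~\ref{re4}), and then invoke Marley's inequality for admissible filtrations. Your approach is cleaner and isolates a depth bound of independent interest; the paper's computational approach, on the other hand, produces the explicit identity $e_2-e_1+e_0-\ell(R/I)=\sum_{n\geq 2}(n-1)\ell(\widetilde{(I')^{n+1}}/J'\widetilde{(I')^n})$ (when $I'$ is integrally closed), which is reused repeatedly in the proofs of the upper bounds for $e_3(I)$ in Section~\ref{results-2}.
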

The above theorem provides a necessary condition for the Ratliff-Rush filtration of $I$ to behave well modulo a superficial element. Suppose $I$ is an integrally closed ideal.  In Corollary \ref{final-conclusion}, we show that 
$e_3(I)\leq e_2(I)-e_1(I)+e_0(I)-\ell(R/I)$
if one of the following conditions holds:   (i) $r_J(I)\leq 3$;  (ii) $e_2(I)\leq 3$; (iii) $e_1(I)-e_0(I)+\ell(R/I)\leq 2$ or (iv) $e_2(I)-e_1(I)+e_0(I)-\ell(R/I)\leq 1.$
In fact when $d=3$, we prove that in all these cases, the equality $e_3(I)= e_2(I)-e_1(I)+e_0(I)-\ell(R/I)$ holds if and only if  the Ratliff-Rush filtration of $I$ behaves well modulo a superficial element. These are consequences of the following  theorem. 



\begin{theorem}\label{all-theorems}
Let $(R,\m)$ be a Cohen-Macaulay local ring of dimension $d\geq 3$, $I$ an integrally closed $\m$-primary ideal and $J$ a minimal reduction of $I$. Then 
\begin{equation}\label{01}
    e_3(I)\leq  \frac{(r_J(I)-1)}{2}\big(e_2(I)-e_1(I)+e_0(I)-\ell ({R}/{I})\big);
\end{equation}
\begin{align}\label{02}
e_3(I)\leq \frac{\big(e_1(I)-e_0(I)+\ell(R/I)\big)}{2} \big(e_2(I)-e_1(I)+e_0(I)-\ell ({R}/{I})\big) \text{ and }\end{align}
\begin{align}\label{03} 
e_3(I)\leq \frac{\big(e_2(I)-1\big)}{2} \big(e_2(I)-e_1(I)+e_0(I)-\ell ({R}/{I})\big).\end{align}
Further, suppose $d=3$ and equality holds in any one of \eqref{01}, \eqref{02} or \eqref{03}. Then the Ratliff-Rush filtration of $I$ behaves well modulo a superficial element. 
Conversely, suppose the Ratliff-Rush filtration of $I$ behaves well modulo a superficial sequence $x_1,\ldots,x_{d-2}\in I$. Then  (i) equality holds in \eqref{01} provided $r_J(I)\leq 3$; (ii) equality holds in \eqref{02} provided $e_1(I)-e_0(I)+\ell(R/I)\leq 2$ and (iii) equality holds in \eqref{03} provided $e_2(I)\leq 3$.

\end{theorem}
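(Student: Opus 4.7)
The plan is to work throughout with the Ratliff-Rush filtration $\widetilde{\mathcal I} = \{\widetilde{I^n}\}$, exploiting three facts: $e_i(\widetilde{\mathcal I}) = e_i(I)$ for every $i$; for any superficial $x \in I$, the identity $\widetilde{I^{n+1}}\colon x = \widetilde{I^n}$ makes $x^* \in G(\widetilde{\mathcal I})_1$ a non-zerodivisor; and $\widetilde I = I$ when $I$ is integrally closed. I first reduce to $d = 3$ by going modulo a superficial sequence of length $d - 3$ chosen so that $\ell(R/I)$, $r_J(I)$, and each $e_i(I)$ with $i \leq 3$ are preserved; this is delicate because integral closure of $I$ need not itself survive the quotient, but the only surviving fact actually needed below is $\widetilde{I} = I$ pointwise in the quotient, which can be arranged by appropriate choice.

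In dimension three, I fix a superficial element $x \in I$, let $R' = R/(x)$, and study the $IR'$-admissible filtration $K = \{K_n := \widetilde{I^n}R'\}$, for which $K_1 = IR'$ by integral closure of $I$. The short exact sequence
\[
0 \longrightarrow R/\widetilde{I^{n-1}} \xrightarrow{\,x\,} R/\widetilde{I^n} \longrightarrow R'/K_n \longrightarrow 0
\]
gives $H_K(n) = H_{\widetilde{\mathcal I}}(n) - H_{\widetilde{\mathcal I}}(n-1)$, hence $P_K(n) = P_I(n) - P_I(n-1) = e_0(I)\binom{n+1}{2} - e_1(I) n + e_2(I)$. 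Telescoping $g_n := P_I(n) - H_{\widetilde{\mathcal I}}(n)$, using $g_N = 0$ for $N \gg 0$ and $g_0 = -e_3(I)$, produces the central identity
\[
e_3(I) = \sum_{k \geq 1} \bigl(P_K(k) - H_K(k)\bigr).
\]

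Writing $v_n := \ell(\widetilde{I^{n+1}}/J\widetilde{I^n}) = \ell(K_{n+1}/JK_n)$ (the second equality follows from $\widetilde{I^{n+1}} \cap (x) = x\widetilde{I^n}$ together with $x \in J$), Hilbert-series computations in the two-dimensional ring $R'$ give the identity
\[
e_2(I) - e_1(I) + e_0(I) - \ell(R/I) = \sum_{n \geq 1}(n-1)\,v_n
\]
and the inequality
\[
e_3(I) \leq \sum_{n \geq 2}\binom{n}{2}\,v_n,
\]
the latter being an equality precisely when $K$ coincides with the Ratliff-Rush filtration of $IR'$ in $R'$, i.e.\ when the Ratliff-Rush filtration of $I$ behaves well modulo $x$. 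Granting these, the three stated bounds then follow uniformly from the elementary estimate $\binom{n}{2} \leq \tfrac{M-1}{2}(n-1)$ valid for $n \leq M-1$, applied with $M$ taken successively to be $r_J(I)$ (since $v_n = 0$ for $n \geq r_J(I)$), $e_1(I) - e_0(I) + \ell(R/I)$ (using $\sum_{n \geq 1}v_n = e_1(I) - e_0(I) + \ell(R/I)$ and that each nonzero $v_n \geq 1$), and $e_2(I)$ (invoking Itoh's lower bound $e_2(I) \geq e_1(I) - e_0(I) + \ell(R/I)$ for integrally closed ideals).

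The equality characterization in $d = 3$ comes from tracking when each combinatorial step is sharp: equality in $\binom{n}{2} \leq \tfrac{M-1}{2}(n-1)$ forces every nonzero $v_n$ to sit at $n = M-1$, while equality overall additionally forces the $e_3$-inequality to be sharp, which is exactly the condition $\widetilde{I^n}R' = \widetilde{(IR')^n}$ for all $n$. Conversely, good Ratliff-Rush behavior modulo a superficial sequence of length $d-2$ converts the $e_3$-inequality into an equality, and the numerical hypothesis in each of (i)--(iii) pins the support of $v_n$ to the single value $n = M - 1$, upgrading the combinatorial step to equality as well. I expect the main obstacle to be establishing the sharp form of the inequality $e_3(I) \leq \sum \binom{n}{2} v_n$ together with its equality criterion: the correction term must be isolated in a way that makes its vanishing equivalent to the Ratliff-Rush condition, and this will require a Sally-module style analysis in $R'$ going beyond the standard Huckaba-Marley formulas for Cohen-Macaulay associated graded rings.
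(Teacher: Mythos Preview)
Your overall architecture is close to the paper's, but there is a genuine gap in the choice of filtration in $R'$. You work with $K_n=\widetilde{I^n}R'$, the \emph{image} of the Ratliff-Rush filtration of $I$, whereas the paper works with $\widetilde{(I')^n}$, the Ratliff-Rush filtration of $I'$ itself. These coincide precisely when the Ratliff-Rush filtration behaves well modulo $x$, which is the very property you are trying to detect, so you cannot assume it. The difference matters because $G(K)$ need not have positive depth: the Huckaba--Marley formulas in dimension two then give only
\[
e_2(I)-e_1(I)+e_0(I)-\ell(R/I)\;=\;\sum_{m\ge 2}(m-1)\,v_m(K)\;-\;\sum_{n\ge 3}(n-2)\,\ell\bigl((K_{n-1}:J')/K_{n-2}\bigr),
\]
so your claimed identity is really an inequality $e_2-e_1+e_0-\ell(R/I)\le\sum(m-1)v_m(K)$, and this points the \emph{wrong way} for your combinatorial step. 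From $e_3(I)\le\sum\binom{n}{2}v_n(K)\le\frac{M-1}{2}\sum(n-1)v_n(K)$ you cannot descend to $\frac{M-1}{2}\bigl(e_2-e_1+e_0-\ell(R/I)\bigr)$.

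The paper avoids this by passing all the way to $\{\widetilde{(I')^n}\}$, which \emph{does} have $\depth G\ge 1$; then $e_2-e_1+e_0-\ell(R/I)=\sum_{n\ge 2}(n-1)\ell\bigl(\widetilde{(I')^{n+1}}/J'\widetilde{(I')^n}\bigr)$ is an exact identity (this is where integral closure of $I'$ enters, forcing $\widetilde{I'}=I'$). The bridge back to $e_3(I)$ is the lemma $e_3(I)=\widetilde{e_3}(I')+(b_I-s_I)$ with $b_I-s_I\le 0$, and vanishing of $b_I-s_I$ is exactly the Ratliff-Rush-behaves-well condition. A secondary issue: your bound \eqref{01} needs $v_n=0$ for $n\ge r_J(I)$, but with your $v_n$ this amounts to $\widetilde{r}_J(\mathcal F)\le r_J(I)$, which is only known in dimension two; the paper instead uses $\widetilde{r}_{J'}(I')\le r_{J'}(I')\le r_J(I)$, where the first inequality is a two-dimensional fact. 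Once you switch to $\{\widetilde{(I')^n}\}$ throughout, your outline becomes essentially the paper's proof.
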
  
 The above inequalities may not hold if $I$ is not integrally closed as shown by Example \ref{example 4.7}. 
As a corollary, we obtain that $$e_3(I)\leq (e_2(I)-1)^2/2$$ for an integrally closed $\m$-primary ideal $I$. Theorem \ref{all-theorems},  provides sufficient and necessary conditions for the Ratliff-Rush filtration behaving well modulo superficial element in dimension three as demonstrated by Examples \ref{eg4} and \ref{eg-2-sec-6}. In addition, it has some interesting applications. In an attempt to extend Rossi's result in dimension three, we prove the following 
result for integrally closed ideals which gives an improvement of the 
earlier known bounds. 
\begin{theorem}\label{theorem-t3}
Let $(R,\m)$ be a Cohen-Macaulay local ring of dimension $d\geq 3$, $I$ an integrally closed $\m$-primary ideal and $J\subseteq I$ a minimal reduction of $I$. Suppose $\depth G(I)\geq d-3$. Then 
\begin{align*}
    r_J(I)\leq e_1(I)-e_0(I)+\ell (R/I)+1+e_2(I)(e_2(I)-e_1(I)+e_0(I)-\ell (R/I))-e_3(I).
\end{align*}
\end{theorem}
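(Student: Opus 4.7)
The proof plan is to reduce to dimension three using the depth hypothesis, apply Rossi's two-dimensional bound modulo a superficial element, and bridge the resulting gap via inequality \eqref{03} of Theorem \ref{all-theorems}.

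First, since $\depth G(I) \geq d-3$, there exists a superficial sequence $y_1, \ldots, y_{d-3} \in I$ whose initial forms in $G(I)$ form a regular sequence. Passing to $\bar R := R/(y_1, \ldots, y_{d-3})$ preserves the Cohen-Macaulay property, the integral closedness of $I$ (via the usual Itoh-type hyperplane-section result), the Hilbert coefficients $e_i(I)$, the length $\ell(R/I)$, and the reduction number $r_J(I)$. Thus we may assume $d = 3$. Next, fix a sufficiently general superficial element $x \in I$ and set $R' := R/(x)$, $I' := IR'$, $J' := JR'$. Then $R'$ is Cohen-Macaulay of dimension two and, since $x$ is superficial, $e_i(I) = e_i(I')$ for $0 \leq i \leq 2$ and $\ell(R/I) = \ell(R'/I')$. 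Rossi's two-dimensional theorem immediately yields
\begin{equation*}
r_{J'}(I') \leq e_1(I) - e_0(I) + \ell(R/I) + 1.
\end{equation*}

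The excess $r_J(I) - r_{J'}(I')$ is governed by the failure of the Ratliff-Rush filtration of $I$ to descend to $R'$: by Remark \ref{re4}, if this filtration behaves well modulo $x$ then $r_J(I) \leq e_1(I) - e_0(I) + \ell(R/I) + 1$ and we are done. In the general case, a careful comparison of the Hilbert-Samuel functions of the filtrations $\{I^n\}$ and $\{\widetilde{I^n}\}$ modulo $x$ expresses this excess in terms of $e_2(I)$, $e_3(I)$ and the quantity $A := e_2(I) - e_1(I) + e_0(I) - \ell(R/I)$. Invoking inequality \eqref{03} of Theorem \ref{all-theorems}, namely $2e_3(I) \leq (e_2(I) - 1)A$, one then converts the excess into the quadratic correction $e_2(I)\cdot A - e_3(I)$ and substitutes into Rossi's bound to obtain the stated inequality.

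The principal obstacle is the bookkeeping in the lifting step: deriving a sharp bound of the shape $r_J(I) \leq r_{J'}(I') + e_2(I) A - e_3(I)$ from the comparison of $\{I^n\}$ and $\{\widetilde{I^n}\}$ above and below $x$. The integral closedness of $I$ is essential, both to apply Theorem \ref{all-theorems} and to ensure that the relevant Hilbert-series identities take the required form; without \eqref{03} one would only recover a weaker bound. I expect the argument to be, in essence, a one-dimension-higher refinement of Rossi's original dimension-two analysis, calibrated by the Ratliff-Rush discrepancy and tightened by \eqref{03}.
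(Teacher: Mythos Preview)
Your reduction to $d=3$ is fine, but the heart of the $d=3$ argument is missing, and what you call ``bookkeeping'' is in fact the entire proof. You propose to bound the jump $r_J(I)-r_{J'}(I')$ and then invoke inequality \eqref{03}. There is no known direct bound of the shape $r_J(I)\le r_{J'}(I')+e_2(I)A-e_3(I)$; when $\depth G(I)=0$ the reduction number can increase uncontrollably upon going modulo a superficial element, and a ``comparison of Hilbert--Samuel functions'' does not produce such an inequality. Moreover, inequality \eqref{03} is an upper bound on $e_3(I)$, not a relation between $r_J(I)$ and $r_{J'}(I')$; it does not by itself convert any excess into the correction term you want.

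The paper's route is genuinely different. It never compares $r_J(I)$ with $r_{J'}(I')$. Instead it starts from Rossi's inequality \eqref{Rossi's result} in dimension three, $r_J(I)\le \sum_{n\ge 0} v_n(\mathcal F)-e_0(I)+\ell(R/I)+1$ for the Ratliff--Rush filtration $\mathcal F=\{\widetilde{I^n}\}$, and then bounds $\sum v_n(\mathcal F)$. Using the modified Koszul complex for $\mathcal F'$ in $R'$ one gets $\sum v_n(\mathcal F)=e_1(I)+\sum_n \ell(H_2(C_.(\mathcal F',n)))$, and the Koszul term is shown to be at most $\widetilde e_3(\mathcal F')-e_3(I)$. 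The final step bounds $\widetilde e_3(\mathcal F')$ by $e_2(I)\cdot A$ via Blancafort's formula $\widetilde e_3(\mathcal F')=\sum_n \ell\big(H^2_{\mathcal R_+}(\mathcal R(\widetilde{\mathcal F'}))_{n+1}\big)$, the monotonicity of these cohomology lengths, the inequality $a_2(\mathcal R(\widetilde{\mathcal F'}))\le \widetilde r(\mathcal F')-2$, and Lemma~\ref{4.6} (not Theorem~\ref{all-theorems}) to bound $\widetilde r(\mathcal F')$. The integral closedness of $I$ enters only through Lemma~\ref{4.6}. If you want to salvage your outline, you would need to supply precisely this chain of inequalities; your sketch as written does not.
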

For  a minimal reduction $J$ of $\mathcal I$, we set $$\widetilde{r}_J(\mathcal I):=\sup\{n\in \mathbb Z|\widetilde{I_{n}}\not=J\widetilde{I_{n-1}}\}.$$ 
Suppose $d=2$. Then 
it is known that 
$\widetilde{r}_J(\mathcal{I})\leq r_J(\mathcal{I})$, see \cite{ms} and \cite{rs}. 
Assume that $I_1$ is integrally closed. In Lemma \ref{4.6}, we show that 
\begin{equation}\label{bd-1}
\widetilde{r}_J(\mathcal{I})\leq e_2(\mathcal{I})-e_1(\mathcal{I})+e_0(\mathcal{I})-\ell(R/I_1)+2.
\end{equation}

In \cite{e}, Elias proved that $\depth G(I^n)$ is constant for $n\gg0$. Also, $\depth G(I^n)\geq 1$ for $n\gg 0$ \IFF\; $I$ has a regular element. In \cite{Put1}, Puthenpurakal gave a necessary and sufficient condition for $\depth G(I^n)\geq 2.$ Suppose $d\geq 4$. In Theorem \ref{thm-e4}, we prove that (i) if $e_4(I)<0$, then $\depth G(I^n)\leq 2$ for $n\gg0$ and (ii)  if $I^q$ is integrally closed for a sufficiently large $q$ and $r_J(I)\leq 3$, then $e_4(I)\leq 0.$ The later was earlier proved for an asymptotically normal ideal \cite[Theorem 2.5]{mf2}. In Proposition \ref{cpr recover}, we partially recover \cite[Corollary 4.5]{cpr} on the vanishing of $e_3(I)$  and further prove the bound 
\begin{equation}\label{bd-e3}
 e_3(I)\leq \ell(I^3/JI^2)+\ell((J\cap I^3)/JI^2)\end{equation}
 for an integrally closed ideal $I$  with $r_J(I)\leq 3$ in Theorem \ref{2.11}. 

This paper is organised in five sections beginning with a brief mention of the notations in Section \ref{notation}. In Section \ref{results-1}, we prove Theorem \ref{theorem-t1} and the bound given in \eqref{bd-1}. Theorem \ref{all-theorems} is proved in Section \ref{results-2}. The last section of the paper presents Theorem \ref{theorem-t3}, the results on the signature of $e_4(I)$ and the bound on $e_3(I)$ given in \eqref{bd-e3}.  Our results are supported by examples.

\section{Notation}\label{notation}
{\bf{I.}} It is well known that the Hilbert series of $\mathcal I$, defined as $H(\mathcal I,t)=\mathop\sum\limits_{n\geq 0} \ell(I_n/I_{n+1})t^n$, is a rational function, i.e., there exists a unique rational polynomial $h_{\mathcal I}(t)\in\mathbb{Q}[t]$ with $h_{\mathcal I}(1)\neq 0$ such that $$H(\mathcal I,t)=\frac{h_{\mathcal I}(t)}{(1-t)^d}.$$ For every $i\geq 0$, we define $e_i(\mathcal I)=\frac{h_{\mathcal I}^{(i)}(1)}{i!}$, where $h_{\mathcal I}^{(i)}(1)$ denotes the $i$-th formal derivative of the polynomial $h_{\mathcal I}(t)$ at $t=1$. The integers $e_i(\mathcal I)$ are called the Hilbert coefficients of  $\mathcal {I}$ and for $0\leq i\leq d$, these are same as defined earlier in the Introduction. We refer to \cite{gr} for details.
We write $\widetilde{e_i}(I)$ for the coefficients $e_i(\mathcal{I})$ when $\mathcal{I}=\{\widetilde{I^n}\}_{n\geq 0}.$

{\bf{II.}} 
The postulation number $\eta(I)$ of $I$ is defined to be $\eta(I):=\min\{n\in \mathbb{Z}~|~ P_I(m)=H_I(m)$ for $ m\geq n\}$. 

{\bf{III.}} For a graded module $M=\mathop\bigoplus\limits_{n\geq 0}M_n$, $M_n$ denote its $n$-th graded piece and $M(-j)$ denote the graded module $M$ shifted to the left by degree $j.$ Similarly for a complex $C_{.}$, we write $C_{.}(-j)$ for the complex $C_{.}$ shifted to the left by degree $j.$

{\bf{IV.}} The Rees ring of  $\mathcal{I}$ is  $\mathcal{R}(\mathcal{I})=\mathop\bigoplus\limits_{n\geq 0} I_nt^n \subseteq R[t]$ where $t$ is an indeterminate and the associated graded ring of $\mathcal{I}$ is $G(\mathcal{I})=\mathop\bigoplus\limits_{n\geq0}I_n/I_{n+1}$. We write  $R(I)$ and $G(I)$ for the Rees ring and the associated graded ring respectively when $\mathcal{I}=\{I^n\}_{n\geq 0}$. Set $\mathcal{R}_+=\mathop\bigoplus\limits_{n\geq 1}I_nt^n.$

{\bf{V.}} 
Suppose $x_1,\ldots,x_s$ is a sequence in $R$. 
We write
$\underline{x}$ and $(\underline{x})$ for the sequence 
$x_1,\ldots,x_s$ and the ideal $(x_1,\ldots,x_s)\subseteq R$
respectively. We use the notation $I^\prime$ and $R^\prime$ for going modulo superficial element(s). 

{\bf{VI.}} For a numerical function $f:\mathbb  Z\longrightarrow  \mathbb Z$, we put $\triangle f(n) =f(n+1)-f(n)$ and recursively we can define $\triangle^i f(n):=\triangle(\triangle^{i-1} f(n))$ for all $i\geq 1$. 

 {\bf{VII.}} We write $\mathcal{F}$ for the filtration $\{\widetilde{I^n}\}_{n\geq 0}$ and for an $I$ admissible filtration $\mathcal{I}$
with a minimal reduction $J$, $v_n(\mathcal{I})=\ell(I_{n+1}/JI_n).$ 
For future reference, we recall the following inequality from the proof of \cite[Theorem 1.3]{r}:
\begin{equation}\label{Rossi's result}
r_J(I)\leq \mathop\sum\limits_{n\geq 0}v_n(\mathcal{F})-e_0(I)+\ell (R/I)+1.
\end{equation}

\section{Ratliff-Rush filtration and $e_3(I)$}\label{results-1}
Suppose $(R,\m)$ is a Cohen-Macaulay local ring of dimension $d\geq 3$. Marley \cite[Corollary 2.9]{tm}, proved that if $\depth G(I)\geq d-1,$ then $e_3(I)\geq e_2(I)-e_1(I)+e_0(I)-\ell(R/I)$ and Rossi \cite{r} proved that if $\depth G(I)\geq d-2,$ then $r_J(I)\leq e_1(I)-e_0(I)+\ell(R/I)+1$. As evident by the next example, both the above bounds hold even if $\depth G(I)=0$. In this section, we discuss   
that for both the results, the depth conditions on $G(I)$ can be replaced by a weaker condition on the behavior of the Ratliff-Rush filtration of $I$.

\begin{example}\cite[Example 3.7]{mf2}\label{eg1}
    Let $R= k[x,y,z]_{(x,y,z)}$ and $I=(x^4, y^4, z^4, x^3y, xy^3, y^3z, yz^3).$ Then $\depth G(I)=0$, the Hilbert series of $I$ is
    $$    H(I,t)=\frac{30+12t+22t^2+8t^3-2t^4-12t^5+6t^6}{(1-t)^3}
    $$
    and the Hilbert polynomial is
    $$
    P_I(n)=64{{n+2}\choose 3}-48{{n+1}\choose 2}+4{n \choose 1}.
    $$
    Therefore, $e_2(I)-e_1(I)+e_0(I)-\ell (R/I)=-10$ and $e_3(I)=0.$ We also note that $J=(x^4, y^4, z^4)$ is a minimal reduction of $I$ and $r_J(I)=4\leq 14=e_1(I)-e_0(I)+\ell(R/I)$. 
\end{example}
Recall that the Ratliff-Rush closure $\widetilde{I}$ of an ideal $I$ is the stable value of the following chain of ideals: 
$$I\subseteq (I^2:I)\subseteq (I^3:I^2)\subseteq\ldots\subseteq (I^{n+1}:I^n)\subseteq\ldots$$
It is of interest to understand the interaction between the Ratliff-Rush closure of powers of $I$ and a superficial element $x_1\in I$. 
In general, $\widetilde{I^n}R^\prime\subseteq \widetilde{I^nR^\prime}$
for all $n\geq 0$ and equality holds for $n\gg 0$ where $R^\prime=R/(x_1).$
\begin{definition}
Let $x_1,\ldots,x_s\in I$ be a superficial sequence. Then, we say that 
\begin{enumerate}
    \item the Ratliff-Rush filtration of $I$ behaves well modulo $x_1$ if    $\widetilde{I^n}R^\prime= \widetilde{I^nR^\prime}$
     for all $n\geq 0$ and 
    \item the Ratliff-Rush filtration of $I$ behaves well modulo $x_1,\ldots,x_s$ if $I/(x_1,\ldots,x_{i-1})$ behaves well modulo the image of $x_i$ in $R/(x_1,\ldots,x_{i-1})$ for all $1 \leq i \leq s$, where $(x_1,\ldots,x_{i-1})=(0)$ for $i=1$.
\end{enumerate}
 \end{definition}
It is easy to see that the above definition is equivalent to the one given in \cite[Definition 4.4]{Put2}.  If the Ratliff-Rush filtration of $I$ behaves well modulo one superficial sequence, then it does so for any superficial sequence.  It follows from \cite[Corollary 4.7]{Put2}.
 At this point, we recall some relevant discussions from literature, mainly \cite{Put1} and \cite{Put2}. As in \cite{Put1}, we define $L^I(R)=\mathop\bigoplus\limits_{n\geq 0} R/I^{n+1}.$ The exact sequence 
$$0\to R(I)\to R[t]\to L^I(R)(-1)\to 0$$ defines an $R(I)$-module structure on $L^I(R).$ Note that $L^I(R)$ is not a finitely generated $R(I)$-module. However, the local cohomology modules $H_{\mathcal{M}}^i (L^I(R))$, with support in $\mathcal{M}=m\bigoplus R(I)_+$,
are $*$-Artinian for $0\leq i\leq 2$, see \cite[section 4]{Put1}. For a superficial element $x\in I$, we have the following exact sequence of $R$-modules for each $n\geq 1$:
\begin{equation}
0\longrightarrow\frac{I^{n+1}:x}{I^n}\longrightarrow\frac{R}{I^n}\stackrel{\psi_n^x}{\longrightarrow}\frac{R}{I^{n+1}}\longrightarrow\frac{{R}^\prime}{I^{n+1}{{R}^\prime}}\longrightarrow 0\nonumber,
\end{equation}
where $R^\prime=R/(x)$ and $\psi_n^x(r+I^n)=xr+I^{n+1}$ for all $r\in R$. We write $I^\prime=IR^\prime$. This gives an exact sequence of $R(I)$-modules:
\begin{align}\label{exact-seq-LI-2}
0\longrightarrow B^I(x,R)\longrightarrow L^I(R)(-1){\stackrel{\psi^x}{\longrightarrow}}L^I(R)\stackrel{\rho}{\longrightarrow}{L^I}^\prime({R}^\prime)\longrightarrow 0 
\end{align}
where $\psi^x$ is multiplication by $x$ and 
$$
B^I(x,R)=\bigoplus_{n\geq 0}\frac{I^{n+1}:x}{I^n}.$$
Note that $I^{n+1}:x=I^n$ for all $n\gg 0$. So, $H_\mathcal{M}^0(B^I(x,R))=B^I(x,R)$. The sequence in \eqref{exact-seq-LI-2} induces a long exact sequence of local cohomology modules, see \cite[Section 6]{Put1}:
\begin{eqnarray}\label{sec_fun}
    0\longrightarrow B^I(x,R)&\longrightarrow 
    &H_\mathcal{M}^0(L^I(R))(-1){\longrightarrow }H_\mathcal{M}^0(L^I(R)){\longrightarrow }H_\mathcal{M}^0({L^I}^\prime({R}^\prime))\\
    &\longrightarrow & H_\mathcal{M}^1(L^I(R))(-1){\longrightarrow}H_\mathcal{M}^1(L^I(R)){\longrightarrow}H_\mathcal{M}^1({L^I}^\prime({R}^\prime)) \longrightarrow\ldots\nonumber.
\end{eqnarray}
By \cite[Proposition 4.7]{Put1}, $H_{\mathcal{M}}^0(L^I(R))=\mathop\bigoplus\limits_{n\geq 0}\frac{\widetilde{I^{n+1}}}{I^{n+1}}.$ For convenience, we write $b_I:=\ell(B^I(x,R))$ and $s_I:=\ell(H^0({L^I}^\prime({R}^\prime)).$ Note that $b_I-s_I\leq 0.$
\begin{remark}\label{remark-R}
\begin{enumerate}
\item\label{remark-RR}
 By \cite[Theorem 2.3]{Put2},  Ratliff-Rush filtration of $I$ behaves well modulo $x$ if and only if $H_{\mathcal{M}}^1(L^I(R))=0.$ Equivalently, $b_I-s_I=0.$ To see this, consider the length of modules in \eqref{sec_fun}. We have 
\begin{equation}
\ell(B^I(x,R))-\ell(H_{\mathcal{M}}^0(L^I(R)))+\ell(H_{\mathcal{M}}^0(L^I(R)))-\ell(H_{\mathcal{M}}^0({L^I}^\prime({R}^\prime)))+\ell(Im)=0\nonumber
\end{equation}
where $Im:=Image\big(H_{\mathcal{M}}^0({L^I}^\prime({R}^\prime))\longrightarrow H_{\mathcal{M}}^1(L^I(R)(-1))\big)$. Therefore, $b_I-s_I=0$ if and only if $Im=0$ if and only if 
$\ker \big(H_{\mathcal{M}}^1(L^I(R)(-1))\to H_{\mathcal{M}}^1(L^I(R))\big)=0$ which is equivalent to $H_{\mathcal{M}}^1(L^I(R))=0$ by   \cite[Lemma 1.10]{Put1} and \cite[Proposition 4.4]{Put1}. 

\item \label{remark-RR-2}
Suppose $\depth G(I)\geq 2$. Then $I^{n+1}:x=I^n$ for all $n\geq 0$ which gives $b_I=0$. Further, $\depth G(I^\prime)\geq 1$ implies $\widetilde{{I^\prime}^{n+1}}={I^\prime}^{n+1}$ for all $n\geq 0$. Therefore, $H_{\mathcal{M}}^0(L^{I^\prime}(R^\prime))=\mathop\bigoplus\limits_{n\geq 0}\frac{\widetilde{{I^\prime}^{n+1}}}{{I^\prime}^{n+1}}=0$ which gives $s_I=0$. By part \eqref{remark-RR}, Ratliff-Rush filtration of $I$ behaves well modulo a superficial element. Converse may not be true as evident by Example \ref{ex1.2}.
\item Suppose $I$ is a normal $\m$-primary ideal. Then $\frac{\widetilde{I^n}+(x)}{(x)}= \frac{I^n+(x)}{(x)}=\widetilde{\big(\frac{I^n+(x)}{(x)}\big)}$ for all $n\geq 1$ if and only if $\depth G(I^\prime)\geq 1.$ Therefore,  for a normal $\m-$primary ideal $I$, Ratliff-Rush filtration of $I$ behaves well modulo $x$ if and only if $\depth G(I)\geq 2.$ 
\end{enumerate}
\end{remark}
\begin{example}\cite[Example 3.8]{cpr}\label{ex1.2}
Let $R=Q[[x,y,z]]$ and $I=(x^2-y^2,y^2-z^2,xy,yz,xz).$ The Hilbert Series is $$
H(I,t)=\frac{5+6t^2-4t^3+t^4}{(1-t)^3}.
$$
Then $e_2(I)=e_3(I)=0$. By \cite[Theorem 6.2]{Put2}, Ratliff-Rush filtration of  $I$ behaves well modulo a superficial element. However, $\depth G(I)=0$ as $x^2\in(I^2:I)\subseteq\widetilde{I}$ but $x^2\notin I$.  Again in this case, we have $0=e_3(I)\geq e_2(I)-e_1(I)+e_0(I)-\ell(R/I)=-1.$
\end{example}

  Now we prove the following result which can be seen as a generalization of Marley's result \cite[Corollary 2.9]{tm}. 
\begin{theorem}\label{theorem-marley-gen}
Let $(R,\m)$ be a Cohen-Macaulay local ring of dimension $d \geq 3$ and $I$ an $\m$-primary ideal. Suppose the Ratliff-Rush filtration of $I$ behaves well modulo a superficial sequence $x_1,x_2,\ldots,x_{d-2}\in I$.  Then $e_3(I)\geq e_2(I)-e_1(I)+e_0(I)-\ell(R/I).$ 
\end{theorem}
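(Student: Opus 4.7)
The plan is to reduce to the case $d=3$ by induction on dimension and then, in dimension three, exploit the Ratliff-Rush filtration $\mathcal F=\{\widetilde{I^n}\}$, which enjoys the same Hilbert coefficients as $I$ (since $\widetilde{I^n}=I^n$ for $n\gg 0$). The key upshot of the behaving-well hypothesis, to be established below, is that $\depth G(\mathcal F)\geq d-1$, after which one can invoke Marley's inequality \cite[Corollary 2.9]{tm} applied to $\mathcal F$ rather than to $I$.

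For the inductive step ($d\geq 4$), set $R'=R/(x_1)$ and $I'=IR'$. The second clause of the definition of behaving-well yields that the Ratliff-Rush filtration of $I'$ behaves well modulo the superficial sequence $\overline{x_2},\ldots,\overline{x_{d-2}}$ in $R'$, of length $d-3$ in a Cohen-Macaulay ring of dimension $d-1\geq 3$. The induction hypothesis gives $e_3(I')\geq e_2(I')-e_1(I')+e_0(I')-\ell(R'/I')$. Invariance of the first $d-1$ Hilbert coefficients under superficial reduction (so $e_i(I)=e_i(I')$ for $0\leq i\leq 3$, since $d-1\geq 3$) together with the elementary bound $\ell(R'/I')\leq \ell(R/I)$, obtained from the exact sequence $0\to R/(I:x_1)\to R/I\to R'/I'\to 0$, then yield the claim.

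For the base case $d=3$, the central point is to establish $\depth G(\mathcal F)\geq 2$. A direct computation shows that $\widetilde{I^{n+1}}:x_1=\widetilde{I^n}$ for every $n\geq 0$: given $x_1 r\in\widetilde{I^{n+1}}$, so that $x_1rI^t\subseteq I^{n+1+t}$ for some $t$, one enlarges $t$ until $x_1$ acts regularly modulo $I^{n+1+t}$ (superficiality of $x_1$) and deduces $rI^t\subseteq I^{n+t}$. Consequently $x_1^\ast\in G(\mathcal F)_1$ is a nonzerodivisor and $\widetilde{I^n}\cap(x_1)=x_1\widetilde{I^{n-1}}$. Combined with the behaving-well identity $\widetilde{I^n}R'=\widetilde{(I')^n}$, this identifies $G(\mathcal F)/x_1^\ast G(\mathcal F)$ with $G(\mathcal F')$, where $\mathcal F'=\{\widetilde{(I')^n}\}$ is the Ratliff-Rush filtration of $I'$ in the two-dimensional ring $R'$. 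The same superficiality argument applied in $R'$ to a superficial element of $I'$ gives $\depth G(\mathcal F')\geq 1$, whence $\depth G(\mathcal F)\geq 2$. Marley's inequality then yields $e_3(\mathcal F)\geq e_2(\mathcal F)-e_1(\mathcal F)+e_0(\mathcal F)-\ell(R/\widetilde{I})$, and substituting $e_i(\mathcal F)=e_i(I)$ together with $\ell(R/\widetilde{I})\leq \ell(R/I)$ (from $\widetilde I\supseteq I$) completes the proof.

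The main obstacle is the clean identification $G(\mathcal F)/x_1^\ast G(\mathcal F)\cong G(\mathcal F')$, which is precisely where the behaving-well hypothesis is indispensable; without it, one only has a surjection and cannot push the depth from $R'$ back to $R$. A secondary technical point is that \cite[Corollary 2.9]{tm} is formulated for the $I$-adic filtration, whereas we apply it to $\mathcal F$; since Marley's argument rests only on the depth condition on the associated graded ring together with superficial reduction, its extension to $I$-admissible filtrations with $\depth G(\cdot)\geq d-1$ is routine.
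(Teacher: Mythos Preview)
Your argument is correct. The inductive step for $d\geq 4$ matches the paper's exactly (with the minor note that $\ell(R'/I')=\ell(R/I)$ is actually an equality since $x_1\in I$, so your exact-sequence inequality is stronger than needed). The base case $d=3$, however, follows a genuinely different route from the paper.

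The paper reduces to dimension two: it passes to $R'=R/(x_1)$, uses the identity $e_3(I)=\widetilde{e_3}(I')+b_I-s_I$ (Lemma~\ref{lemma-value-of-e3}), observes that the behaving-well hypothesis forces $b_I=s_I$ (Remark~\ref{remark-R}\eqref{remark-RR}), and then compares $\widetilde{e_3}(I')=\sum_{n\geq 2}\binom{n}{2}\ell(\widetilde{(I')^{n+1}}/J'\widetilde{(I')^n})$ directly against $e_2(I)-e_1(I)+e_0(I)-\ell(R/I)=\sum_{n\geq 2}(n-1)\ell(\widetilde{(I')^{n+1}}/J'\widetilde{(I')^n})-\ell(\widetilde{I'}/I')$ via the explicit Huckaba--Marley formulas in dimension two. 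Your approach instead stays in dimension three, establishes $\depth G(\mathcal F)\geq 2$ for $\mathcal F=\{\widetilde{I^n}\}$ (a fact the paper also uses, but only later in Remark~\ref{re4}), and then invokes Marley's inequality for the filtration $\mathcal F$ as a black box.

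What each buys: your route is shorter and more conceptual, treating Marley's theorem as the endpoint rather than reproving it. The paper's route is more explicit and, crucially, isolates the exact defect $b_I-s_I$ and the term-by-term formulas; these are reused throughout Section~\ref{results-2} to prove the upper bounds on $e_3(I)$ and to characterize the equality cases (Proposition~\ref{converse-of-theorem}, Theorems~\ref{T_{11}} and \ref{T-14}). So the paper's computational approach is doing double duty, while yours is a clean one-shot proof of the theorem at hand.
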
  
We need the following lemma. 
\begin{lemma}\label{lemma-value-of-e3}
Let $(R,\m)$ be a Cohen-Macaulay local ring of dimension three, $I$ an $\m$-primary ideal and $x\in I$ a superficial element. 
Then $$e_3(I)={\widetilde{e_3}({I}^\prime)}+b_I-s_I.$$
\end{lemma}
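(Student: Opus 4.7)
The plan is to convert the four-term exact sequence (\ref{exact-seq-LI-2}) into a functional identity on $h$-polynomials and then read off $e_3(I)$ via the derivative formula $e_i(I) = h_I^{(i)}(1)/i!$. Although $L^I(R)$ is not finitely generated, each graded piece $R/I^{n+1}$ has finite length, so the formal Hilbert series $H(L^I(R), t) = \sum_{n \geq 0} \ell(R/I^{n+1})\,t^n$ lives in $\Q[[t]]$ and the alternating-sum-of-lengths identity induced by the exact sequence is valid there.

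A standard computation using $\ell(R/I^{n+1}) = \sum_{k=0}^n \ell(I^k/I^{k+1})$ together with $H(I, t) = h_I(t)/(1-t)^3$ (recall $d = 3$) gives $H(L^I(R), t) = h_I(t)/(1-t)^4$, and analogously $H(L^{I'}(R'), t) = h_{I'}(t)/(1-t)^3$ over the two-dimensional ring $R'$. Taking the alternating sum of Hilbert series in (\ref{exact-seq-LI-2}), and using $H(L^I(R)(-1), t) = t\,H(L^I(R), t)$, one obtains
\begin{equation*}
(1 - t)\,H(L^I(R), t) = H(L^{I'}(R'), t) - H(B^I(x, R), t),
\end{equation*}
which simplifies to $h_I(t) = h_{I'}(t) - (1 - t)^3\,H(B^I(x, R), t)$.

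To replace $h_{I'}(t)$ by the $h$-polynomial $h_{\mathcal{F}'}(t)$ of the Ratliff-Rush filtration $\mathcal{F}' = \{\widetilde{I'^n}\}_{n \geq 0}$ of $I'$, I apply the short exact sequence
\begin{equation*}
0 \longrightarrow H^0_\mathcal{M}(L^{I'}(R')) \longrightarrow L^{I'}(R') \longrightarrow L^{I'}(R')/H^0_\mathcal{M}(L^{I'}(R')) \longrightarrow 0,
\end{equation*}
whose quotient has $n$-th graded piece $R'/\widetilde{I'^{n+1}}$. The same Hilbert-series computation applied to the filtration $\mathcal{F}'$ gives $H(L^{I'}(R')/H^0_\mathcal{M}(L^{I'}(R')), t) = h_{\mathcal{F}'}(t)/(1 - t)^3$, hence $h_{I'}(t) = h_{\mathcal{F}'}(t) + (1 - t)^3\,H(H^0_\mathcal{M}(L^{I'}(R')), t)$. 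Substituting produces
\begin{equation*}
h_I(t) - h_{\mathcal{F}'}(t) = (1 - t)^3\,\bigl[H(H^0_\mathcal{M}(L^{I'}(R')), t) - H(B^I(x, R), t)\bigr].
\end{equation*}

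It remains to take the third derivative at $t = 1$. Since $B^I(x, R)$ and $H^0_\mathcal{M}(L^{I'}(R'))$ both have finite length (the latter because $\widetilde{I'^{n+1}} = I'^{n+1}$ for $n \gg 0$), the bracket is a genuine polynomial evaluating at $t = 1$ to $s_I - b_I$, while $(1 - t)^3$ vanishes to order three at $t = 1$ with third derivative equal to $-6$ there. Leibniz's rule therefore leaves only the single surviving term $\binom{3}{3}(-6)(s_I - b_I) = 6(b_I - s_I)$, so $h_I^{(3)}(1) - h_{\mathcal{F}'}^{(3)}(1) = 6(b_I - s_I)$; dividing by $3! = 6$ gives $e_3(I) = \widetilde{e_3}(I') + b_I - s_I$. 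The only delicate point is legitimising Hilbert-series arithmetic for the non-finitely-generated modules $L^I(R)$ and $L^{I'}(R')$, but since the identities only need to hold in $\Q[[t]]$ and both correction terms lie in $\Q[t]$, the derivative extraction is unambiguous.
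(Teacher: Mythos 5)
Your proof is correct, and it takes a genuinely more self-contained route than the paper. The paper settles the lemma in two lines by quoting known results: $e_3(I)=e_3(I^\prime)+b_I$ is \cite[Proposition 1.2]{rv} (the standard behaviour of the last Hilbert coefficient modulo a superficial element), and $e_3(I^\prime)=\widetilde{e_3}(I^\prime)-s_I$ over the two-dimensional ring $R^\prime$ is \cite[1.5(b)]{Put2}. You instead rederive both facts simultaneously from the exact sequence \eqref{exact-seq-LI-2} together with the identification $H^0_{\mathcal{M}}(L^{I^\prime}(R^\prime))=\bigoplus_{n\geq 0}\widetilde{{I^\prime}^{n+1}}/{I^\prime}^{n+1}$, packaging them into the single identity $h_I(t)-h_{\mathcal{F}^\prime}(t)=(1-t)^3\bigl[H(H^0_{\mathcal{M}}(L^{I^\prime}(R^\prime)),t)-H(B^I(x,R),t)\bigr]$ and reading off $e_3$ via $e_i=h^{(i)}(1)/i!$, which is precisely how the paper defines $\widetilde{e_3}(I^\prime)$ for the filtration $\{\widetilde{{I^\prime}^n}\}$ over the two-dimensional $R^\prime$ (Notation I). The points you flag as delicate are indeed fine: the Hilbert-series arithmetic is justified degree by degree because every graded piece has finite length, the sequence \eqref{exact-seq-LI-2} is also exact in degree $0$ since $x\in I$ gives $\ell(R/I)=\ell(R^\prime/I^\prime R^\prime)$, and both correction modules are of finite length (superficiality of $x$ and $\widetilde{{I^\prime}^{n}}={I^\prime}^{n}$ for $n\gg 0$), so evaluating the bracket at $t=1$ to $s_I-b_I$ and the Leibniz computation are unambiguous. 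The trade-off is a somewhat longer computation in exchange for avoiding the two external citations, and your single identity makes transparent why the correction term is exactly $b_I-s_I$.
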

\begin{proof}
By \cite[Proposition 1.2]{rv}, $e_3(I)=e_3({I}^\prime)+b_I$. Since $\dim {R}^\prime=2$, we have  $e_3({I}^\prime)=\widetilde{e_3}({I}^\prime)-s_I$ 
by using \cite[1.5(b)]{Put2}. 
\end{proof}
\begin{proof}[Proof of Theorem \ref{theorem-marley-gen}]
 Suppose $d=3$. We write $R^\prime=R/(x_1)$ and $I^\prime=I/(x_1).$
 We have $e_i(I)=e_i({I}^\prime)=\widetilde{e_i}({I}^\prime)$ for $0\leq i\leq 2$. Using \cite[Corollary 4.13]{hm}, we have


\begin{align}
e_2(I)-e_1(I)+e_0(I)-\ell(R/I)&=e_2({I}^\prime)-e_1({I}^\prime)+e_0({I}^\prime)-\ell({R}^\prime/{I}^\prime)\nonumber\\
\begin{split}
    &= \widetilde{e_2}({I}^\prime)-\widetilde{e_1}({I}^\prime)+\widetilde{e_0}({I}^\prime)-\ell({R}^\prime/{I}^\prime)\label{the-equality-for-f-1}\\
   &=\sum_{n\geq 1}n\ell (\widetilde{{{I}^\prime}^{n+1}}/J^\prime\widetilde{{{I}^\prime}^n})-\sum_{n\geq 0}\ell(\widetilde{{{I}^\prime}^{n+1}}/J^\prime\widetilde{{{I}^\prime}^{n}})+\ell(R^\prime/J^\prime)-\ell({R}^\prime/{I}^\prime)\\ 
    &= \sum_{n\geq 1}n\ell (\widetilde{{{I}^\prime}^{n+1}}/J^\prime\widetilde{{{I}^\prime}^n})-\sum_{n\geq 1}\ell(\widetilde{{{I}^\prime}^{n+1}}/J^\prime\widetilde{{{I}^\prime}^{n}})-\ell(\widetilde{{I}^\prime}/J^\prime)+\ell(I^\prime/J^\prime)\\  
    &\leq  \sum_{n\geq 1}n\ell (\widetilde{{{I}^\prime}^{n+1}}/J^\prime\widetilde{{{I}^\prime}^n})-\sum_{n\geq 1}\ell(\widetilde{{{I}^\prime}^{n+1}}/J^\prime\widetilde{{{I}^\prime}^{n}})-\ell({{I}^\prime}/J^\prime)+\ell(I^\prime/J^\prime)\\
&=\sum_{n\geq 2}(n-1)\ell (\widetilde{{{I}^\prime}^{n+1}}/J^\prime\widetilde{{{I}^\prime}^n})
\end{split}
\end{align}
On the other hand, by \cite[Theorem 2.5]{rv}, we have \begin{align}\label{formula-for-e3tilde}
\widetilde{e_3}({I}^\prime)= \sum_{n\geq 2}{n \choose 2} \ell (\widetilde{{{I}^\prime}^{n+1}}/J^\prime\widetilde{{{I}^\prime}^n}).
\end{align}
Now suppose the Ratliff-Rush filtration of $I$ behaves well modulo $x_1$. Then by Lemma \ref{lemma-value-of-e3} and part \eqref{remark-RR} of Remark \ref{remark-R},  $e_3(I)={\widetilde{e_3}({I}^\prime)}=\sum_{n\geq 2}{n \choose 2} \ell (\widetilde{{{I}^\prime}^{n+1}}/J^\prime\widetilde{{{I}^\prime}^n})\geq \sum_{n\geq 2}(n-1)\ell (\widetilde{{{I}^\prime}^{n+1}}/J^\prime\widetilde{{{I}^\prime}^n})\geq e_2(I)-e_1(I)+e_0(I)-\ell(R/I).$\\
Now suppose $d\geq 4$ and the Ratliff-Rush filtration of $I$ behaves well modulo $x_1,\ldots,x_{d-2}$. Set $I^\prime=I/(x_1,\ldots,x_{d-3})$ and $R^\prime=R/(x_1,\ldots,x_{d-3})$. Then the Ratliff-Rush filtration of $I^\prime$ behaves well modulo $x_{d-2}$. By induction hypothesis,
$$
e_3(I^\prime)\geq e_2(I^\prime)-e_1(I^\prime)+e_0(I^\prime)-\ell(R^\prime/I^\prime).
$$
Since $e_i(I)=e_i(I^\prime)$ for $0 \leq i \leq 3$. Hence, $e_3(I)\geq e_2(I)-e_1(I)+e_0(I)-\ell(R/I).$
\end{proof}
As an easy consequence, we get Marley's result. 
\begin{corollary}\label{cor-mar-gen}
    Suppose $R$ is a Cohen-Macaulay local ring of dimension $d\geq 3$ and $I$ an $\m$-primary ideal of $R$. Suppose $\depth G(I)\geq d-1.$ Then $e_3(I)\geq e_2(I)-e_1(I)+e_0(I)-\ell(R/I)$.
\end{corollary}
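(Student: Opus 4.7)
The plan is to deduce the corollary from Theorem \ref{theorem-marley-gen} by verifying its hypothesis: that the Ratliff-Rush filtration of $I$ behaves well modulo some superficial sequence of length $d-2$. The bridge from a depth assumption to the Ratliff-Rush behavior is Remark \ref{remark-R}\eqref{remark-RR-2}, which supplies exactly the one-step version of this statement whenever $\depth G(I)\geq 2$. Thus the work is to iterate the remark $d-2$ times, checking that each successive quotient continues to have enough depth.

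First I would pick a superficial sequence $x_1,\ldots,x_{d-2}\in I$, which exists since $R/\m$ is infinite. For $1\leq i\leq d-2$, write $R_i:=R/(x_1,\ldots,x_{i-1})$ and let $I_i$ denote the image of $I$ in $R_i$ (with $R_1=R$, $I_1=I$). The classical fact relating superficial sequences to depth says that whenever $\depth G(I)\geq s$, the initial forms $x_1^\ast,\ldots,x_s^\ast$ form a regular sequence in $G(I)$ and consequently $G(I_i)\cong G(I)/(x_1^\ast,\ldots,x_{i-1}^\ast)$ for $i-1\leq s$. Applying this with $s=d-1$ gives
\[
\depth G(I_i)\;\geq\;(d-1)-(i-1)\;=\;d-i\;\geq\;2\qquad \text{for every } 1\leq i\leq d-2.
\]

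Next, for each such $i$ the ring $R_i$ has dimension $d-i+1\geq 3$, and $\depth G(I_i)\geq 2$, so Remark \ref{remark-R}\eqref{remark-RR-2} applied to the pair $(R_i,I_i)$ with superficial element the image of $x_i$ yields that the Ratliff-Rush filtration of $I_i$ behaves well modulo $x_i$. By the very definition of ``behaves well modulo a superficial sequence,'' this states that the Ratliff-Rush filtration of $I$ behaves well modulo $x_1,\ldots,x_{d-2}$. Theorem \ref{theorem-marley-gen} then delivers the desired inequality $e_3(I)\geq e_2(I)-e_1(I)+e_0(I)-\ell(R/I)$.

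There is essentially no obstacle beyond this bookkeeping: the substance has been absorbed into Theorem \ref{theorem-marley-gen}, while Remark \ref{remark-R}\eqref{remark-RR-2} already packages the implication ``$\depth G(I)\geq 2 \Rightarrow b_I-s_I=0$.'' The only point to watch is that each quotient $R_i$ should retain dimension $\geq 3$ so that the remark applies; this is guaranteed by restricting $i\leq d-2$, which in turn is why one needs $d\geq 3$ and a superficial sequence of length exactly $d-2$ (no more, no less).
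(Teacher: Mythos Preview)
Your proof is correct and relies on the same two ingredients as the paper's—Remark \ref{remark-R}\eqref{remark-RR-2} and Theorem \ref{theorem-marley-gen}. The only organizational difference is that the paper argues by induction on $d$, going modulo a single superficial element and invoking Theorem \ref{theorem-marley-gen} only in the base case $d=3$, whereas you verify the full sequential hypothesis of Theorem \ref{theorem-marley-gen} in arbitrary dimension and apply it once; both routes are equally valid.
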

\begin{proof}
Suppose $d=3$. Then $\depth G(I)\geq 2$ which implies Ratliff-Rush filtration of $I$ behaves well modulo a superficial element by part \eqref{remark-RR-2} of Remark \ref{remark-R}. The conclusion follows from  Theorem \ref{theorem-marley-gen}. 

Suppose $d\geq 4.$ Let $x\in I$ be a superficial element. Then $\dim R^\prime=d-1$, $\depth G(I^\prime)\geq d-2$ and $e_i(I^\prime)=e_i(I)$ for $0\leq i\leq 3.$ By induction hypothesis, $e_3(I)=e_3(I^\prime)\geq e_2(I^\prime)-e_1(I^\prime)+e_0(I^\prime)-\ell (R^\prime/I^\prime)=e_2(I)-e_1(I)+e_0(I)-\ell (R/I).$
\end{proof}
Proposition \ref{converse-of-theorem} is about the boundary case of  Theorem \ref{theorem-marley-gen} under some restrictions. For that, we need the next lemma which is interesting in itself. It is largely unknown if Rossi's bound can be generalized for reduction number of a filtration $\mathcal{I}$ in dimension two, i.e., whether  $r_J(\mathcal{I})\leq e_1(\mathcal{I})-e_0(\mathcal{I})+\ell(R/I_1)+1$ holds true.   In \cite{ms}, authors proved that 
$\widetilde{r}_J(\mathcal{I})\leq e_2(\mathcal{I})+1$. The following lemma improves the above bound in some cases. 
\begin{lemma}\label{4.6}
 Let $(R,\m)$ be a two dimensional Cohen-Macaulay local ring, $I$ an $\m$-primary ideal and $\mathcal{I}=\{I_n\}$ an $I$-admissible filtration. Suppose 
$I_1$ is an integrally closed ideal.  
Then, for a minimal reduction $J$ of $\mathcal{I}$, we have
$$
\widetilde{r}_J(\mathcal{I})\leq e_2(\mathcal{I})-e_1(\mathcal{I})+e_0(\mathcal{I})-\ell(R/I_1)+2.
$$
\end{lemma}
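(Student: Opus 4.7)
The plan is to translate the problem from $\mathcal{I}$ to the Ratliff--Rush filtration $\mathcal{F}:=\{\widetilde{I_n}\}_{n\ge 0}$ (so that $\widetilde{r}_J(\mathcal{I})=r_J(\mathcal{F})$) and to exploit that $\depth G(\mathcal{F})\ge 1$ by construction. In dimension two this depth condition is enough for the Huckaba--Marley identities (exactly the ones applied in the display in the proof of Theorem~\ref{theorem-marley-gen}) to give
\[
e_0(\mathcal{F})=\ell(R/J),\qquad e_1(\mathcal{F})=\sum_{n\ge 0}\widetilde{v}_n,\qquad e_2(\mathcal{F})=\sum_{n\ge 1}n\,\widetilde{v}_n,
\]
where $\widetilde{v}_n:=\ell(\widetilde{I_{n+1}}/J\widetilde{I_n})$. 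Since $\widetilde{I_n}=I_n$ for $n\gg 0$, the Hilbert--Samuel polynomials of $\mathcal{I}$ and $\mathcal{F}$ coincide, and hence $e_i(\mathcal{I})=e_i(\mathcal{F})$ for $i=0,1,2$.

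Combining these three identities, together with $\widetilde{v}_0=\ell(\widetilde{I_1}/I_1)+\ell(I_1/J)$ and $e_0(\mathcal{I})-\ell(R/I_1)=\ell(I_1/J)$, a direct rearrangement produces the key equality
\[
\sum_{n\ge 2}(n-1)\widetilde{v}_n\;=\;e_2(\mathcal{I})-e_1(\mathcal{I})+e_0(\mathcal{I})-\ell(R/I_1)+\ell(\widetilde{I_1}/I_1).
\]
The hypothesis $I_1=\overline{I_1}$ enters precisely to eliminate the correction term. The standard containment $\widetilde{I_1}\subseteq\overline{I_1}$---obtained by applying the determinantal trick to the inclusion $aI^t\subseteq I_{1+t}$ defining $\widetilde{I_1}$---together with $I_1=\overline{I_1}$ forces $\widetilde{I_1}=I_1$ and hence $\ell(\widetilde{I_1}/I_1)=0$.

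To finish, set $r:=\widetilde{r}_J(\mathcal{I})$; by the definition of $\widetilde{r}_J$, we have $\widetilde{v}_{r-1}\ge 1$ and $\widetilde{v}_n=0$ for $n\ge r$. If $r\ge 3$, then
\[
\sum_{n\ge 2}(n-1)\widetilde{v}_n\;\ge\;(r-2)\,\widetilde{v}_{r-1}\;\ge\;r-2,
\]
which combined with the displayed identity yields $r\le e_2(\mathcal{I})-e_1(\mathcal{I})+e_0(\mathcal{I})-\ell(R/I_1)+2$. If $r\le 2$ the estimate is automatic, since the left-hand side of the identity is a sum of nonnegative integers and so $e_2(\mathcal{I})-e_1(\mathcal{I})+e_0(\mathcal{I})-\ell(R/I_1)\ge 0$, making the asserted right-hand side at least $2\ge r$.

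The main obstacle is the clean vanishing $\widetilde{I_1}=I_1$ in step three: for the $I$-adic filtration this is the classical inclusion $\widetilde{I}\subseteq\overline{I}$, but in the $I$-admissible filtration setting one has to verify that the determinantal argument still applies with the containments provided by admissibility together with $I_1=\overline{I_1}$. All other ingredients---the Huckaba--Marley identities for $\mathcal{F}$, the agreement of low Hilbert coefficients of $\mathcal{I}$ and $\mathcal{F}$, and the counting argument producing $r-2$---are routine once that technical point is in place.
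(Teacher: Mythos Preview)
Your argument is correct and follows essentially the same route as the paper's: both pass to the Ratliff--Rush filtration $\{\widetilde{I_n}\}$, invoke $\depth G(\widetilde{\mathcal{I}})\ge 1$ and the Huckaba--Marley identities to obtain $e_2(\mathcal{I})-e_1(\mathcal{I})+e_0(\mathcal{I})-\ell(R/I_1)=\sum_{n\ge 2}(n-1)\ell(\widetilde{I_{n+1}}/J\widetilde{I_n})$ after using $\widetilde{I_1}=I_1$, and then read off the bound on $\widetilde{r}_J(\mathcal{I})$ by elementary counting. The technical point you flag about $\widetilde{I_1}\subseteq\overline{I_1}$ for admissible filtrations is indeed used (the paper simply asserts $\ell(\widetilde{I_1}/J)=\ell(I_1/J)$); it follows from the determinantal trick applied to the faithful finite $R[I_1t]$-module $\mathcal{R}(\widetilde{\mathcal{I}})$, which is stable under multiplication by $at$ whenever $a\in\widetilde{I_1}$.
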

\begin{proof}
Since  $\ell(\widetilde{I_1}/J)=\ell (I_1/J) $, $e_i(\mathcal{I})=\widetilde{e_i}(\mathcal{I})$ for $0\leq i\leq2$,  and $\depth G(\widetilde{\mathcal{I}})\geq 1$, a similar calculation for filtration, as done in \eqref{the-equality-for-f-1}, will give
 \begin{align*}
   e_2(\mathcal{I})-e_1(\mathcal{I})+e_0(\mathcal{I})-\ell(R/I_1)=\sum_{n\geq 2}(n-1)\ell(\widetilde{I_{n+1}}/J\widetilde{I_n}).\nonumber
 \end{align*}
This gives $\ell(\widetilde{I_{n+1}}/J\widetilde{I_n})=0$ for all $n\geq  e_2(\mathcal{I})-e_1(\mathcal{I})+e_0(\mathcal{I})-\ell(R/I_1)+2$. Hence, $\widetilde{r}_J(\mathcal{I})\leq e_2(\mathcal{I})-e_1(\mathcal{I})+e_0(\mathcal{I})-\ell(R/I_1)+2.$
\end{proof}
\begin{proposition}\label{converse-of-theorem}
Let $(R,\m)$ be a three dimensional Cohen-Macaulay local ring and $I$ an $\m$-primary integrally closed ideal. Suppose $e_2(I)-e_1(I)+e_0(I)-\ell (R/I)\leq 1$. Then $e_3(I)=e_2(I)-e_1(I)+e_0(I)-\ell (R/I)$ if and only if the Ratliff-Rush filtration of $I$ behaves well modulo a superficial element $x\in I$.    
\end{proposition}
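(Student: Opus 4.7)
The plan is to distill, from the proof of Theorem~\ref{theorem-marley-gen}, a single identity that splits $e_3(I)-c$ (where $c:=e_2(I)-e_1(I)+e_0(I)-\ell(R/I)$) into non-negative contributions plus the non-positive term $b_I-s_I$, and then use $c\leq 1$ to collapse everything. Fix a superficial element $x\in I$ (to be chosen with an additional property below), and set $R'=R/(x)$, $I'=IR'$, $J'=JR'$. Let
\[
a_n:=\ell\bigl(\widetilde{{I'}^{n+1}}/J'\widetilde{{I'}^n}\bigr)\quad(n\geq 2),\qquad u:=\ell(\widetilde{I'}/I').
\]
Tracing the displayed chain of (in)equalities from the proof of Theorem~\ref{theorem-marley-gen}, combining with the formula $\widetilde{e_3}(I')=\sum_{n\geq 2}\binom{n}{2}a_n$ of \cite[Theorem 2.5]{rv}, and with Lemma~\ref{lemma-value-of-e3}, one obtains
\[
c\;=\;\sum_{n\geq 2}(n-1)\,a_n\;-\;u,\qquad e_3(I)-c\;=\;\sum_{n\geq 3}\binom{n-1}{2}a_n\;+\;u\;+\;(b_I-s_I).
\]

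For the backward implication, assume the Ratliff-Rush filtration of $I$ behaves well modulo $x$. Then $b_I=s_I$ by Remark~\ref{remark-R}\eqref{remark-RR}. Moreover $\widetilde{I}R'=\widetilde{I'}$ (from the hypothesis for $n=1$) and $\widetilde{I}=I$ (since $I$ is integrally closed), so $\widetilde{I'}=I'$ and $u=0$. The identity $c=\sum_{n\geq 2}(n-1)a_n\leq 1$ combined with $(n-1)\geq 2$ for $n\geq 3$ forces $a_n=0$ for every $n\geq 3$, and the key equation collapses to $e_3(I)=c$.

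For the forward implication, I would choose the superficial element $x$ so that $I'$ is integrally closed in $R'$; by \cite[Corollary 4.7]{Put2}, proving the Ratliff-Rush property modulo this particular $x$ suffices. With this choice $\widetilde{I'}\subseteq\overline{I'}=I'$, hence $u=0$. Now I apply Lemma~\ref{4.6} to the $I'$-adic filtration $\{{I'}^n\}$ (its first term $I'$ is integrally closed):
\[
\widetilde{r}_{J'}\bigl(\{{I'}^n\}\bigr)\;\leq\; c+2\;\leq\; 3,
\]
which is equivalent to $a_n=0$ for $n\geq 3$. The key equation reduces to $e_3(I)-c=b_I-s_I$, and the hypothesis $e_3(I)=c$ forces $b_I=s_I$; by Remark~\ref{remark-R}\eqref{remark-RR} this is exactly that the Ratliff-Rush filtration of $I$ behaves well modulo $x$.

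The main obstacle is the selection of a superficial $x\in I$ for which $I'=IR'$ remains integrally closed in $R'$. Reduction modulo a superficial element does not preserve integral closure in general, and the forward direction of the proposition depends on this compatibility for integrally closed $\mathfrak{m}$-primary ideals in a Cohen-Macaulay local ring of dimension $\geq 2$. This is classical in the area but is the one ingredient that does not come from the formal manipulations of $a_n$, $u$, $b_I$, and $s_I$ carried out above.
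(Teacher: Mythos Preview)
Your argument is correct and follows the same route as the paper: both establish the identity $e_3(I)=c+(b_I-s_I)$ by combining Lemma~\ref{lemma-value-of-e3} with the bound $\widetilde{r}_{J'}(I')\leq 3$ from Lemma~\ref{4.6} (which forces $a_n=0$ for $n\geq 3$, hence $\widetilde{e}_3(I')=c$), and then conclude via Remark~\ref{remark-R}\eqref{remark-RR}. The obstacle you flag---a superficial $x$ with $I'$ integrally closed---is handled in the paper by citing \cite[Lemma~11]{itoh}, and your alternative justification of $u=0$ in the backward direction (via the Ratliff-Rush hypothesis at $n=1$ rather than integral closedness of $I'$) is a minor, harmless variation.
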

\begin{proof}
By \cite[Lemma 11]{itoh}, there exists a superficial element $x$ of $I$ so that $I/(x)$ is an integrally closed ideal of $R^\prime=R/(x)$. Set $I^\prime=IR^\prime$ and $J^\prime=JR^\prime$.  Then 
$e_2(I^\prime)-e_1(I^\prime)+e_0(I^\prime)-\ell (R^\prime/I^\prime)=e_2(I)-e_1(I)+e_0(I)-\ell(R/I)\leq 1.$ By Lemma \ref{4.6}, $\widetilde{r}_{J^\prime}(I^\prime)\leq 3$. This gives $\widetilde{e}_3(I^\prime)=\ell( \widetilde{(I^\prime)^3}/J^\prime\widetilde{(I^\prime)^2})$ using \eqref{formula-for-e3tilde} and $e_2(I)-e_1(I)+e_0(I)-\ell(R/I)=\ell( \widetilde{(I^\prime)^3}/J^\prime\widetilde{(I^\prime)^2})$ using \eqref{the-equality-for-f-1}. Hence, by Lemma \ref{lemma-value-of-e3},  $e_3(I)=\widetilde{e}_3(I^\prime)+b_I-s_I=\ell( \widetilde{(I^\prime)^3}/J^\prime\widetilde{(I^\prime)^2})+b_I-s_I=e_2(I)-e_1(I)+e_0(I)-\ell(R/I)+b_I-s_I$. 
Now $e_3(I)=e_2(I)-e_1(I)+e_0(I)-\ell (R/I)$ if and only if $b_I-s_I=0$ which is equivalent to the Ratliff-Rush filtration of $I$ behaving well modulo $x$
by part \eqref{remark-RR} of Remark \ref{remark-R}. 
\end{proof}
We end this section with the following remark. 
\begin{remark}\label{re4}
Suppose $d=3$ and the Ratliff-Rush filtration of $I$ behaves well modulo a superficial element. Then 
Rossi's bound holds, i.e., $r_J(I)\leq e_1(I)-e_0(I)+\ell(R/I)+1.$
\end{remark}
\begin{proof}
Let $\mathcal{F}=\{\widetilde{I^n}\}_{n\geq 0}$ and $J=(x_1,x_2,x_3)\subseteq I$ be a minimal reduction of $I$. We write $R^\prime=R/(x_1)$ and $\mathcal{F}^\prime=\{\frac{\widetilde{I^n}+(x_1)}{(x_1)}\}_{n\geq 0}$. Since $\widetilde{I^n}R^\prime= \widetilde{I^nR^\prime}$ for all $n\geq 0$, we get that $\depth G(\mathcal{F}^\prime)\geq 1$. This gives $\depth G(\mathcal{F})\geq 2.$ Therefore, $e_1(I)=e_1(\mathcal{F})=\mathop\sum\limits_{n\geq 0}v_n(\mathcal{F})$ by \cite[Theorem 4.7]{hm}.  Now using \eqref{Rossi's result}, we get that $r_J(I)\leq e_1(I)-e_0(I)+\ell(R/I)+1.$
\end{proof}
\section{Bounds for $e_3(I)$}\label{results-2}
As discussed earlier, Ratliff-Rush filtration of $I$ behaving well modulo a superficial sequence is a useful condition. However, it still involves all the Ratliff-Rush powers of $I$ and $I^\prime.$ Theorem \ref{theorem-marley-gen} provides a computable necessary condition for the Ratliff-Rush filtration of $I$ behaving well. 
In this section, we prove upper bounds for $e_3(I)$ for an integrally closed ideal $I$. Consequently, we obtain sufficient conditions for the Ratliff-Rush filtration of $I$ behaving well modulo a superficial element in dimension three. 
\begin{theorem}\label{T_{11}}
Let $(R,\m)$ be a Cohen-Macaulay local ring of dimension $d\geq 3$, $I$ an $\m$-primary integrally closed ideal and $J$ a minimal reduction of $I$. Then 
\begin{equation}\label{10}
    e_3(I)\leq  \frac{(r_J(I)-1)}{2}\big(e_2(I)-e_1(I)+e_0(I)-\ell ({R}/{I})\big).
\end{equation}
Furthermore, suppose equality holds in \eqref{10} and $d=3$, then the Ratliff-Rush filtration of $I$ behaves well modulo a superficial element. 
Conversely, if Ratliff-Rush filtration of $I$ behaves well modulo superficial sequence $x_1,\ldots,x_{d-2} \in I$ and $r_J(I)\leq 3$, then equality holds in \eqref{10}. 
\end{theorem}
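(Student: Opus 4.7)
The plan is to reduce \eqref{10} to the case $d=3$ and then estimate $e_3(I)$ using Lemma \ref{lemma-value-of-e3} together with the expression \eqref{formula-for-e3tilde} for $\widetilde{e_3}$ of a two-dimensional filtration and the identity \eqref{the-equality-for-f-1}, which (for integrally closed $I^\prime$) becomes an equality. Iterating \cite[Lemma 11]{itoh} I would choose superficial $x_1,\ldots,x_{d-3}\in I$ along which $I$ stays integrally closed; since for any superficial element in a Cohen-Macaulay local ring the Hilbert coefficients $e_i$ are preserved for $i$ up to one less than the ambient dimension, the quantities $e_0(I), e_1(I), e_2(I), e_3(I)$ and $\ell(R/I)$ all survive the reduction, and $r_{J^\prime}(I^\prime)\le r_J(I)$ for the image $J^\prime$. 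Thus I may assume $d=3$, and one further application of \cite[Lemma 11]{itoh} lets me fix a superficial $x\in I$ with $I^\prime=I/(x)$ integrally closed.

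\textbf{Key inequality in $d=3$.} Set $a_n:=\ell\big(\widetilde{(I^\prime)^{n+1}}/J^\prime\widetilde{(I^\prime)^n}\big)$. Lemma \ref{lemma-value-of-e3} combined with $b_I-s_I\le 0$ gives $e_3(I)\le \widetilde{e_3}(I^\prime)=\sum_{n\ge 2}\binom{n}{2}a_n$ via \eqref{formula-for-e3tilde}. Since $\widetilde{r}_{J^\prime}(I^\prime)\le r_{J^\prime}(I^\prime)\le r_J(I)$, only indices with $2\le n\le r_J(I)-1$ can contribute, and on this range the elementary bound $\binom{n}{2}=\tfrac{n(n-1)}{2}\le\tfrac{(r_J(I)-1)(n-1)}{2}$ applies, producing $\widetilde{e_3}(I^\prime)\le\tfrac{r_J(I)-1}{2}\sum_{n\ge 2}(n-1)a_n$. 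Because $I^\prime$ is integrally closed we have $\widetilde{I^\prime}=I^\prime$, so $\ell(\widetilde{I^\prime}/J^\prime)=\ell(I^\prime/J^\prime)$ and the single inequality in \eqref{the-equality-for-f-1} collapses to an equality, identifying $\sum_{n\ge 2}(n-1)a_n$ with $e_2(I)-e_1(I)+e_0(I)-\ell(R/I)$. Chaining these bounds yields \eqref{10}.

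\textbf{Equality, converse, and main obstacle.} If equality holds in \eqref{10} when $d=3$, then both $b_I-s_I=0$ and the binomial comparison must be saturated; by Remark \ref{remark-R}(1), $b_I=s_I$ is precisely the statement that the Ratliff-Rush filtration of $I$ behaves well modulo $x$. For the converse under $r_J(I)\le 3$ with Ratliff-Rush behaving well along $x_1,\ldots,x_{d-2}$, after the same reduction Remark \ref{remark-R}(1) yields $b_I=s_I$, and $\widetilde{r}_{J^\prime}(I^\prime)\le 3$ leaves only the $n=2$ term in both sums; the binomial estimate is tight there since $\binom{2}{2}=1=\tfrac{r_J(I)-1}{2}$ when $r_J(I)=3$, and both sides vanish when $r_J(I)\le 2$. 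The most delicate step I anticipate is the dimension reduction in the first paragraph, namely the careful iteration of Itoh's lemma to maintain integral closure at every stage together with the superficial-element invariance of $e_3$; once that bookkeeping is in place, the three-dimensional core is essentially a one-line comparison of the weights $\binom{n}{2}$ against $(n-1)$.
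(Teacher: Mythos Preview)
Your proposal is correct and follows essentially the same route as the paper: reduce to $d=3$ via iterated Itoh superficial elements, use Lemma \ref{lemma-value-of-e3} together with $b_I-s_I\le 0$ to bound $e_3(I)$ by $\widetilde{e_3}(I^\prime)$, then compare the weights $\binom{n}{2}$ against $\tfrac{r_J(I)-1}{2}(n-1)$ over the range $2\le n\le \widetilde{r}_{J^\prime}(I^\prime)-1$, invoking integral closure of $I^\prime$ to turn \eqref{the-equality-for-f-1} into an equality. The one place you diverge slightly is in the converse: the paper simply sandwiches $e_3(I)$ between Theorem \ref{theorem-marley-gen} (giving $e_3(I)\ge e_2-e_1+e_0-\ell(R/I)$) and \eqref{10} with $\tfrac{r_J(I)-1}{2}\le 1$, whereas you redo the computation directly by observing that only the $n=2$ term survives. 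Both arguments are valid; the paper's is a touch cleaner since Theorem \ref{theorem-marley-gen} is already available, while yours is more self-contained. One bookkeeping point worth making explicit in your write-up of the converse for $d\ge 4$: after reducing by $d-3$ superficial elements the reduction number may strictly drop, so the equality you obtain in dimension three is a priori with the \emph{reduced} reduction number; your case split (if $r_J(I)\le 2$ then $a_2=0$, otherwise $r_J(I)=3$) is exactly what closes this gap and should be stated with the original $r_J(I)$ in mind.
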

\begin{proof}
 Suppose $d=3$. Let $x\in I$ be a superficial element such that $I/(x)$ is an integrally closed ideal of $R^\prime=R/(x)$. Set $I^\prime=IR^\prime$ and $J^\prime=JR^\prime$, then $\dim R^\prime=2$ and $e_i(I)=e_i(I^\prime)=\widetilde{e_i}(I^\prime)$ for $0\leq i\leq 2$. Since $I^\prime$ is integrally closed, we have $\ell(\widetilde{I^\prime}/J^\prime)=\ell(I^\prime/J^\prime)$ which gives equality in \eqref{the-equality-for-f-1}, i.e., 
\begin{align}
e_2(I)-e_1(I)+e_0(I)-\ell(R/I)
 =\sum_{n\geq 2}(n-1)\ell (\widetilde{{I^\prime}^{n+1}}/J^\prime\widetilde{{I^\prime}^n}).\nonumber
\end{align}
Note that $\ell (\widetilde{{I^\prime}^{n+1}}/J^\prime\widetilde{{I^\prime}^n})=0$ for $n\geq \widetilde{r}_{J^\prime}(I^\prime)$. 
Now using \eqref{the-equality-for-f-1} and \eqref{formula-for-e3tilde}, we get
\begin{align}
\widetilde{e_3}(I^\prime)-\frac{(r_J(I)-1)}{2}\big(e_2(I)-e_1(I)+e_0(I)-\ell(R/I)\big)&=\sum_{n \geq 2}^{\widetilde{r}_{J^\prime}(I^\prime)-1}\Big[{n \choose 2}-\frac{(r_J(I)-1)(n-1)}{2}\Big]\ell (\widetilde{{I^\prime}^{n+1}}/J^\prime\widetilde{{I^\prime}^n})\nonumber\\
\begin{split}
&=\sum_{n \geq 2}^{\widetilde{r}_{J^\prime}(I^\prime)-1}\frac{(n-1)}{2}(n-(r_J(I)-1))\ell(\widetilde{{I^\prime}^{n+1}}/J^\prime\widetilde{{I^\prime}^n})\label{kf-less-than-0-1}\\
&\leq 0
\end{split}
\end{align}
since $\widetilde{r}_{J^\prime}(I^\prime)\leq r_{J^\prime}(I^\prime)\leq r_J(I)$ by \cite[Theorem 2.1]{ms} and so $ (n-1)(n-(r_J(I)-1))\leq 0$ for each $2\leq n \leq \widetilde{r}_{J^\prime}(I^\prime)-1$. Now by Lemma \ref{lemma-value-of-e3}, 
\begin{align}\label{15}
e_3(I)=\widetilde{e_3}(I^\prime)+(b_I-s_I)\leq \widetilde{e_3}(I^\prime)\leq \frac{(r_J(I)-1)}{2}\big(e_2(I)-e_1(I)+e_0(I)-\ell(R/I)\big).
\end{align}
Suppose
$ e_3(I)=
    \Big(\frac{r_J(I)-1}{2}\Big)\big(e_2(I)-e_1(I)+e_0(I)-\ell ({R}/{I})\big)$. Then by \eqref{15}, we get   $\widetilde{e_3}(I^\prime)+(b_I-s_I)= \widetilde{e_3}(I^\prime)$ which implies $ b_I-s_I=0.$ Therefore, by part \eqref{remark-RR} of Remark \ref{remark-R}, the Ratliff-Rush filtration of $I$ behaves well modulo $x$.\\
Now suppose $d\geq 4$ and
 $x \in I$ is a superficial element  such that $I^\prime=I/(x)$ is integrally closed. 
 Since $r_{J^\prime}(I^\prime)\leq r_J(I)$, by induction hypothesis, we have
\begin{align}
e_3(I)=e_3(I^\prime)&\leq \frac{(r_{J^\prime}(I^\prime)-1)}{2}\big(e_2(I^\prime)-e_1(I^\prime)+e_0(I^\prime)-\ell ({R^\prime}/{I^\prime})\big)\nonumber \\
&\leq \frac{(r_J(I)-1)}{2}\big(e_2(I)-e_1(I)+e_0(I)-\ell ({R}/{I})\big)\nonumber.
\end{align}
 Suppose $r_J(I)\leq 3$ and the Ratliff-Rush filtration of $I$ behaves well modulo superficial sequence $x_1,\ldots, x_{d-2}$. Then, using Theorem \ref{theorem-marley-gen} and \eqref{10}, we get $e_3(I)=
    \Big(\frac{r_J(I)-1}{2}\Big)\big(e_2(I)-e_1(I)+e_0(I)-\ell ({R}/{I})\big)$.
\end{proof}

We prove two more bounds for $e_3(I)$ for an integrally closed ideal $I.$ 
 
\begin{theorem}\label{T-14}
Let $(R,m)$ be a Cohen-Macaulay local ring of dimension $d\geq 3$ and $I$ an $\m$-primary integrally closed ideal. Then 
\begin{align}\label{9}
e_3(I)\leq \frac{\big(e_1(I)-e_0(I)+\ell(R/I)\big)}{2} \big(e_2(I)-e_1(I)+e_0(I)-\ell ({R}/{I})\big).\end{align}
Furthermore, suppose $d=3$ and  equality holds in \eqref{9}, then the Ratliff-Rush filtration of $I$ behaves well modulo a superficial element. Conversely, if Ratliff-Rush filtration of $I$ behaves well modulo a superficial sequence $x_1,\ldots,x_{d-2} \in I$ and $e_1(I)-e_0(I)+\ell(R/I)\leq 2$, then equality holds in \eqref{9}.
\end{theorem}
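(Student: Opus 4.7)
The plan is to follow the template of the proof of Theorem~\ref{T_{11}} closely, with the comparison $\widetilde{r}_{J^\prime}(I^\prime)\le r_J(I)$ that drives the key inequality~\eqref{kf-less-than-0-1} replaced by Rossi's two-dimensional reduction-number bound $r_{J^\prime}(I^\prime)\le e_1(I^\prime)-e_0(I^\prime)+\ell(R^\prime/I^\prime)+1$. First I would dispose of the case $d\ge 4$ by induction: pick a superficial element $x\in I$ with $I/(x)$ integrally closed via \cite[Lemma~11]{itoh}. Since $e_i(I)=e_i(I^\prime)$ for $0\le i\le d-1$ in this range, both sides of \eqref{9} are preserved on passage to $I^\prime$, and the induction hypothesis in dimension $d-1$ delivers the inequality for $I$.

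For $d=3$, set $M:=e_1(I)-e_0(I)+\ell(R/I)$. Choose $x\in I$ superficial with $I^\prime=IR^\prime$ integrally closed. Integral closure of $I^\prime$ forces $\ell(\widetilde{I^\prime}/J^\prime)=\ell(I^\prime/J^\prime)$ and hence equality throughout \eqref{the-equality-for-f-1}, so that
$$e_2(I)-e_1(I)+e_0(I)-\ell(R/I)=\sum_{n\ge 2}(n-1)\,\ell\bigl(\widetilde{(I^\prime)^{n+1}}/J^\prime\widetilde{(I^\prime)^n}\bigr).$$
Combining this with \eqref{formula-for-e3tilde} yields the identity
$$\widetilde{e_3}(I^\prime)-\frac{M}{2}\bigl(e_2(I)-e_1(I)+e_0(I)-\ell(R/I)\bigr)=\sum_{n\ge 2}\frac{(n-1)(n-M)}{2}\,\ell\bigl(\widetilde{(I^\prime)^{n+1}}/J^\prime\widetilde{(I^\prime)^n}\bigr).$$
Rossi's theorem applied to $I^\prime$ in dimension two gives $r_{J^\prime}(I^\prime)\le M+1$, and by \cite{ms} we have $\widetilde{r}_{J^\prime}(I^\prime)\le r_{J^\prime}(I^\prime)\le M+1$. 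Therefore only indices with $2\le n\le M$ can contribute a nonzero length, and for such $n$ the factor $(n-1)(n-M)\le 0$, forcing the sum to be non-positive. Hence $\widetilde{e_3}(I^\prime)\le\frac{M}{2}(e_2(I)-e_1(I)+e_0(I)-\ell(R/I))$, and Lemma~\ref{lemma-value-of-e3} together with $b_I-s_I\le 0$ finishes \eqref{9}.

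For the equality statement when $d=3$, equality in \eqref{9} forces both the displayed sum and $b_I-s_I$ to vanish; the latter is equivalent to the Ratliff-Rush filtration behaving well modulo $x$ by Remark~\ref{remark-R}\eqref{remark-RR}. For the converse, I would first descend modulo $x_1,\ldots,x_{d-3}$ to reduce to $d=3$, replacing the given sequence, if necessary, by one for which each intermediate quotient is integrally closed (possible by iterating \cite[Lemma~11]{itoh}, and harmless because Ratliff-Rush behaving well modulo one superficial sequence implies the same for any superficial sequence). In dimension three, the assumption $M\le 2$ combined with Rossi's bound gives $\widetilde{r}_{J^\prime}(I^\prime)\le 3$, so only the $n=2$ term survives in the sum; when $M=2$ its coefficient $(n-1)(n-M)$ already vanishes, and when $M\le 1$ one has $\widetilde{r}_{J^\prime}(I^\prime)\le 2$, so the sum is empty and both sides of \eqref{9} are simultaneously zero.

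The only place requiring care is the boundary case $M\le 1$ of the converse, where both sides of \eqref{9} collapse to zero and one must use Rossi's bound to rule out a stray positive contribution from some $n\ge 2$ term; once this is in hand, the argument is a direct parallel of Theorem~\ref{T_{11}}, the essential swap being Rossi's dimension-two reduction-number bound in place of the trivial inequality $r_{J^\prime}(I^\prime)\le r_J(I)$.
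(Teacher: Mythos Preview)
Your argument for the inequality~\eqref{9} and the equality implication in dimension three is essentially the paper's proof: the paper writes the same identity, bounds $\widetilde{r}_{J'}(I')\le r_{J'}(I')\le e_1(I')-e_0(I')+\ell(R'/I')+1$ via \cite{ms} and Rossi, and then uses Lemma~\ref{lemma-value-of-e3} together with $b_I-s_I\le 0$, exactly as you do.

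The one genuine difference is in the converse. You reduce to $d=3$ and analyze the surviving terms of the sum by hand (splitting into $M=2$ and $M\le 1$). The paper instead stays in dimension $d$ and simply sandwiches: Theorem~\ref{theorem-marley-gen} gives $e_3(I)\ge e_2(I)-e_1(I)+e_0(I)-\ell(R/I)$ under the Ratliff--Rush hypothesis, while $M\le 2$ forces the right-hand side of~\eqref{9} to be at most $e_2(I)-e_1(I)+e_0(I)-\ell(R/I)$; combined with~\eqref{9} this pins down equality with no case analysis and no descent. Your route is correct but longer, and you should make explicit that after descending to dimension three the hypothesis still guarantees $b_{I}-s_{I}=0$ there (you use this to pass from $\widetilde{e_3}(I')$ back to $e_3(I)$, but the sentence about the converse does not say so).
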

\begin{proof}
 Suppose $d=3$. Let $x\in I$ be a superficial element such that $I/(x)$ is an integrally closed ideal of $R^\prime$. As in \eqref{kf-less-than-0-1}, we have 
\begin{align}
\widetilde{e_3}(I^\prime)-\frac{(\widetilde{r}_{J^\prime}(I^\prime)-1)}{2}\big(e_2(I)-e_1(I)+e_0(I)-\ell(R/I)\big)&=\sum_{n \geq 2}^{\widetilde{r}_{J^\prime}(I^\prime)-1}\Big[{n \choose 2}-\frac{(\widetilde{r}_{J^\prime}(I^\prime)-1)(n-1)}{2}\Big]\ell (\widetilde{{I^\prime}^{n+1}}/J^\prime\widetilde{{I^\prime}^n})\nonumber\\
\begin{split}
&=\sum_{n \geq 2}^{\widetilde{r}_{J^\prime}(I^\prime)-1}\frac{(n-1)}{2}(n-(\widetilde{r}_{J^\prime}(I^\prime)-1))\ell (\widetilde{{I^\prime}^{n+1}}/J^\prime\widetilde{{I^\prime}^n})\label{tilde_e_3}\\
&\leq 0 
\end{split}
 \end{align}
 since $(n-1)(n-(\widetilde{r}_{J^\prime}(I^\prime)-1))\leq 0$ for each $2\leq n\leq \widetilde{r}_{J^\prime}(I^\prime)-1.$ By \cite[Theorem 2.1]{ms}, we have $\widetilde{r}_{J^\prime}(I^\prime)\leq r_{J^\prime}(I^\prime).$ 
Therefore, 
\begin{align}
\widetilde{e}_3(I^\prime)&\leq 
\frac{(\widetilde{r}_{J^\prime}(I^\prime)-1)}{2}\big(e_2(I)-e_1(I)+e_0(I)-\ell(R/I)\big)\nonumber\\
\begin{split}
&\leq \frac{(r_{J^\prime}(I^\prime)-1)}{2}\big(e_2(I)-e_1(I)+e_0(I)-\ell(R/I)\big)\label{18}\\
&\leq  \frac{e_1(I^\prime)-e_0(I^\prime)+\ell (R^\prime/I^\prime)}{2}\big(e_2(I)-e_1(I)+e_0(I)-\ell(R/I)\big)\\
&=\frac{e_1(I)-e_0(I)+\ell(R/I) }{2}\big(e_2(I)-e_1(I)+e_0(I)-\ell(R/I)\big)
\end{split}
\end{align} 
where the last inequality follows from Rossi's bound in dimension two \cite[Corollary 1.5]{r}. Finally, to get the result, we use $e_3(I)=\widetilde{e}_3(I^\prime)+b_I-s_I\leq \widetilde{e_3}(I^\prime)$ as in the proof of  Theorem \ref{T_{11}}.

Suppose $e_3(I)=\frac{(e_1(I)-e_0(I)+\ell(R/I))}{2} \Big(e_2(I)-e_1(I)+e_0(I)-\ell ({R}/{I})\Big) $. Then, by \eqref{18}, $\widetilde{e_3}(I^\prime)+(b_I-s_I)= \widetilde{e_3}(I^\prime)$ which implies $ b_I-s_I=0.$ Therefore by part \eqref{remark-RR} of Remark \ref{remark-R}, the Ratliff-Rush filtration of $I$ behave well modulo $x$.

Now suppose $d\geq 4$ and $x \in I$ is a superficial element  such that $I^\prime=I/(x)$ is integrally closed. 
 Then, by induction hypothesis, we get
$$
e_3(I)=e_3(I^\prime)\leq \frac{(e_1(I^\prime)-e_0(I^\prime)+\ell(R^\prime/I^\prime))}{2} \Big(e_2(I^\prime)-e_1(I^\prime)+e_0(I^\prime)-\ell ({R}/{I})\Big)
$$
$$
=\frac{(e_1(I)-e_0(I)+\ell(R/I))}{2} \Big(e_2(I)-e_1(I)+e_0(I)-\ell ({R}/{I})\Big).
$$


  Suppose $(e_1(I)-e_0(I)+\ell(R/I))\leq 2 $  and the Ratliff-Rush filtration of $I$ behaves well modulo a superficial sequence $x_1,\ldots,x_{d-2} \in I$. Then, using Theorem \ref{theorem-marley-gen} and \eqref{9}, we get
  \begin{equation*}
  e_3(I)= \frac{\big(e_1(I)-e_0(I)+\ell(R/I)\big)}{2} \big(e_2(I)-e_1(I)+e_0(I)-\ell ({R}/{I})\big).\qedhere
  \end{equation*}
\end{proof}
\begin{corollary}\label{3.4}
Let $(R,m)$ be a Cohen-Macaulay local ring of dimension $d\geq 3$ and $I$ an $\m$-primary integrally closed ideal. Then
\begin{align}\label{16} 
e_3(I)\leq \frac{\big(e_2(I)-1\big)}{2} \big(e_2(I)-e_1(I)+e_0(I)-\ell ({R}/{I})\big).\end{align}
Furthermore, suppose $d=3$ and equality holds in \eqref{16}, then Ratliff-Rush filtration of $I$ behaves well modulo a superficial element. Conversely, if Ratliff-Rush filtration of $I$ behaves well modulo a superficial sequence $x_1,\ldots,x_{d-2}$ and $e_2(I)\leq 3$, then equality holds in \eqref{16}.
\end{corollary}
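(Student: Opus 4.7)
The plan is to derive Corollary \ref{3.4} as a direct consequence of Theorems \ref{T-14} and \ref{theorem-marley-gen}, by exploiting the non-negativity of $A := e_2(I) - e_1(I) + e_0(I) - \ell(R/I)$ for integrally closed $I$. The first step is to record this non-negativity: after passing to a superficial element $x$ with $I/(x)$ integrally closed, the inequality in \eqref{the-equality-for-f-1} becomes an equality, writing $A$ as $\sum_{n\geq 2}(n-1)\ell(\widetilde{(I')^{n+1}}/J'\widetilde{(I')^n})$, which is manifestly $\geq 0$ (and hence $A \geq 0$ in any dimension $d \geq 3$ by the usual induction on superficial elements).

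For the main inequality \eqref{16}, I would split on the value of $A$. When $A = 0$, Theorem \ref{T-14} already yields $e_3(I) \leq 0$, which matches the claim. When $A \geq 1$, the elementary observation $e_2(I) - 1 \geq e_2(I) - A = e_1(I) - e_0(I) + \ell(R/I)$ combined with Theorem \ref{T-14} gives
\[ \frac{e_2(I) - 1}{2}\, A \;\geq\; \frac{e_1(I) - e_0(I) + \ell(R/I)}{2}\, A \;\geq\; e_3(I). \]
The forward direction of the equality case (for $d = 3$) is then immediate: equality in \eqref{16} forces equality throughout this chain, in particular in \eqref{9}, and the corresponding half of Theorem \ref{T-14} yields that the Ratliff-Rush filtration of $I$ behaves well modulo a superficial element.

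For the converse, assume the Ratliff-Rush filtration of $I$ behaves well modulo a superficial sequence $x_1, \ldots, x_{d-2}$ and $e_2(I) \leq 3$. Theorem \ref{theorem-marley-gen} gives $e_3(I) \geq A$, while the already-proved \eqref{16} gives $e_3(I) \leq \frac{e_2(I)-1}{2} A$, so $A \leq \frac{e_2(I)-1}{2} A$. If $A = 0$, both sides of \eqref{16} vanish and equality is automatic. If $A \geq 1$, this inequality forces $e_2(I) \geq 3$, hence $e_2(I) = 3$ under our hypothesis, and then $\frac{e_2(I)-1}{2}A = A$ squeezes $e_3(I) = A$, again producing equality in \eqref{16}. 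I do not anticipate a substantive obstacle here: the corollary is essentially a repackaging of Theorem \ref{T-14} in terms of $e_2(I)$, with the hypothesis $e_2(I) \leq 3$ playing the role that $e_1(I) - e_0(I) + \ell(R/I) \leq 2$ plays there; the only point requiring attention is the non-negativity of $A$, which is built into the proof of Theorem \ref{theorem-marley-gen}.
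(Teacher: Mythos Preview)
Your proposal is correct and follows essentially the same route as the paper: both split on whether $A:=e_2(I)-e_1(I)+e_0(I)-\ell(R/I)$ vanishes, use $e_1(I)-e_0(I)+\ell(R/I)\leq e_2(I)-1$ when $A\geq 1$ to deduce \eqref{16} from Theorem~\ref{T-14}, and squeeze with Theorem~\ref{theorem-marley-gen} for the converse. The only cosmetic differences are that the paper invokes Theorem~\ref{T_{11}} rather than Theorem~\ref{T-14} for the $A=0$ case, and for the $d=3$ equality statement it unpacks the chain $e_3(I)=\widetilde{e}_3(I')+b_I-s_I\leq\widetilde{e}_3(I')\leq\frac{e_2(I)-1}{2}A$ directly (via \eqref{18} and Lemma~\ref{lemma-value-of-e3}) to read off $b_I-s_I=0$, whereas you cite the equality clause of Theorem~\ref{T-14} as a black box; these are the same argument at different levels of abstraction.
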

\begin{proof}
We may assume that 
$e_2(I)-e_1(I)+e_0(I)-\ell (R/I)\geq 1$. Otherwise, $e_2(I)-e_1(I)+e_0(I)-\ell (R/I)=0$ and $e_3(I)\leq 0$ 
 by Theorem \ref{T_{11}}. We have 
$e_1(I)-e_0(I)+\ell (R/I) \leq e_2(I)-1$. Hence, the inequality in \eqref{16} follow from Theorem \ref {T-14}. \\
Further, suppose $d=3.$ Then, using \eqref{18} and Lemma \ref{lemma-value-of-e3},  we have
\begin{align}
e_3(I)= \widetilde{e}_3(I)+b_I-s_I\leq \widetilde{e}_3(I)\leq \frac{(e_2(I)-1)}{2}(e_2(I)-e_1(I)+e_0(I)-\ell (R/I)).\label{e}
\end{align}
Now, suppose $e_3(I)= \frac{(e_2(I)-1)}{2} (e_2(I)-e_1(I)+e_0(I)-\ell ({R}/{I}))$. Then by \eqref{e}, $\widetilde{e}_3(I^\prime)+(b_I-s_I)=\widetilde{e}_3(I^\prime)$ which implies $b_I-s_I=0$. Therefore, by part \eqref{remark-RR} of Remark \ref{remark-R}, the Ratliff-Rush filtration of $I$ behaves well modulo superficial element. For the  converse, suppose $e_2(I)\leq 3$ and the Ratliff-Rush filtration of $I$ behaves well modulo superficial sequence $x_1,\ldots,x_{d-2}$. Then using Theorem  \ref{theorem-marley-gen} and \eqref{16}, we get the result.
\end{proof}
In \cite[Proposition 6.4 ]{Put2} and \cite[Proposition 2.6]{mf2}, authors proved that for an $\m$-primary ideal $I$, $e_2(I)\leq 1$ implies $e_3(I)\leq 0.$ For an integrally closed $\m$-primary ideal $I$, we obtain the following corollary. 
\begin{corollary}
 Let $(R,\m)$ be a Cohen-Macaulay local ring of dimension $d\geq 3$ and $I$ an $\m$-primary integrally closed ideal. Then $e_3(I)\leq (e_2(I)-1)^2/2.$
\end{corollary}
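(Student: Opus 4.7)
The plan is to derive the corollary directly from Corollary~\ref{3.4}, which already provides
$$e_3(I) \leq \frac{e_2(I) - 1}{2}\bigl(e_2(I) - e_1(I) + e_0(I) - \ell(R/I)\bigr).$$
Matching this against the target bound $(e_2(I) - 1)^2/2$, it suffices to show that
$$e_2(I) - e_1(I) + e_0(I) - \ell(R/I) \leq e_2(I) - 1,$$
i.e.\ that $e_1(I) - e_0(I) + \ell(R/I) \geq 1$, at least in the regime $e_2(I) \geq 1$ where multiplying through by the nonnegative factor $(e_2(I) - 1)/2$ preserves the inequality. I would split the argument on whether $e_2(I) \geq 1$ or $e_2(I) = 0$.

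For the main case $e_2(I) \geq 1$, I would combine Northcott's inequality $e_1(I) \geq e_0(I) - \ell(R/I)$ with the Huneke-Ooishi characterization of equality: if $e_1(I) = e_0(I) - \ell(R/I)$, then $I^2 = JI$ for every minimal reduction $J$, which in turn forces $G(I)$ to be Cohen-Macaulay and all higher Hilbert coefficients to vanish. In particular this would give $e_2(I) = 0$, contrary to the assumption. Consequently the inequality must be strict, $e_1(I) - e_0(I) + \ell(R/I) \geq 1$, and plugging this into Corollary~\ref{3.4} yields
$$e_3(I) \leq \frac{e_2(I) - 1}{2}\bigl(e_2(I) - e_1(I) + e_0(I) - \ell(R/I)\bigr) \leq \frac{(e_2(I) - 1)^2}{2}.$$

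The remaining degenerate case $e_2(I) = 0$ I would handle separately by invoking Itoh's theorem (cited in the introduction): for an integrally closed $\m$-primary ideal, $e_2(I) = 0$ forces $G(I)$ to be Cohen-Macaulay, so in particular $e_3(I) = 0 \leq 1/2 = (0-1)^2/2$. I do not anticipate any serious obstacle, as the entire argument is essentially a one-line consequence of Corollary~\ref{3.4} once paired with the classical Northcott-Huneke-Ooishi analysis of the first Hilbert coefficient. The only subtlety is the mild sign bookkeeping needed to ensure that the multiplication used to pass from the Corollary~\ref{3.4} bound to $(e_2(I)-1)^2/2$ is justified, which is exactly what drives the case split on $e_2(I)$.
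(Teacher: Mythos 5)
Your proposal is correct and follows essentially the same route as the paper: both deduce the bound from Corollary \ref{3.4} by showing $e_2(I)-e_1(I)+e_0(I)-\ell(R/I)\leq e_2(I)-1$ in the nondegenerate case and disposing of the boundary case via the classical Northcott/Huneke--Ooishi/Itoh results forcing $G(I)$ Cohen--Macaulay and $e_3(I)=0$. The only cosmetic difference is that you split on $e_2(I)=0$ versus $e_2(I)\geq 1$ while the paper splits on $e_1(I)-e_0(I)+\ell(R/I)=0$ versus $\geq 1$, your version making the sign of the factor $(e_2(I)-1)/2$ explicit.
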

\begin{proof}
Suppose $e_1(I)-e_0(I)+\ell(R/I)=0$. Then $G(I)$ is Cohen-Macaulay and $e_3(I)=0\leq {(e_2(I)-1)}^2/2.$ Now let $e_1(I)-e_0(I)+\ell(R/I)\geq 1$. Then by Corollary \ref{3.4}, 
\begin{align*}
    e_3(I)\leq \frac{(e_2(I)-1)(e_2(I)-1)}{2}.\qedhere
\end{align*}
\end{proof}
\begin{remark}
Suppose $d=3$ and equality holds in \eqref{9}, then 
equality holds in \eqref{10}. Converse is not true as evident by Example \ref{eg2}.
\begin{proof}
 Suppose equality holds in \eqref{9}, then the Ratliff-Rush filtration of $I$ behaves well modulo superficial element. Hence by Remark \ref{re4} and Theorem \ref{T_{11}}, we get 
 \begin{align*}
 e_3(I)\leq& \frac{(r_J(I)-1)}{2}\big(e_2(I)-e_1(I)+e_0(I)-\ell(R/I)\big)\\
 &\leq \frac{e_1(I)-e_0(I)+\ell(R/I)}{2}\big(e_2(I)-e_1(I)+e_0(I)-\ell(R/I)\big)\\
 &=e_3(I).\qedhere
 \end{align*} 
\end{proof}
\end{remark}
The bound in Theorem \ref{T_{11}} is sharp as evident by the following example. 
\begin{example}\cite[Example 3.2]{cpr}\label{eg2}
Let $R = K[[x,y,z]]$, where $x,y,z$ are indeterminates and $K$ is a field of characteristic $\neq 3$. Let $N=(x^4,x(y^3+z^3),y(y^3+z^3),z(y^3+z^3))$ and set $I = N+\m^5,$ where $\m$ is the maximal ideal of $R$. The ideal $I$ is a normal $\m$-primary ideal, $r_J(I)=3$ for any minimal reduction $J$ of $I$ and we have
$$
H(I, t)=\frac{31+43t+t^2+t^3}{{(1-t)}^3}$$
which gives $e_2(I)-e_1(I)+e_0(I)-\ell(R/I)=1=e_3(I).$ Thus we have equality in Theorem \ref{T_{11}} which gives 
that the Ratliff-Rush filtration of $I$ behaves well modulo superficial element
where as $\frac{(e_1(I)-e_0(I)+\ell(R/I))}{2}\big(e_2(I)-e_1(I)+e_0(I)-\ell ({R}/{I})\big)=3/2>e_3(I)$  and $\frac{(e_2(I)-1)}{2}\big(e_2(I)-e_1(I)+e_0(I)-\ell ({R}/{I})\big)=3/2>e_3(I).$ Note that  $e_1(I)-e_0(I)+\ell(R/I)=3$ and $e_2(I)=4$.
\end{example}
We recall Example \ref{eg1} to note that the bounds given in Theorem \ref{T_{11}}, Theorem \ref{T-14} and Theorem \ref{3.4} may not hold for non-integrally closed $\m$-primary ideals.
\begin{example}\label{example 4.7}
 We have  $\frac{(r_J(I)-1)}{2}\big(e_2(I)-e_1(I)+e_0(I)-\ell ({R}/{I})\big)=-15$,   $\frac{(e_1(I)-e_0(I)+\ell(R/I))}{2}\big(e_2(I)-e_1(I)+e_0(I)-\ell ({R}/{I})\big)=-70$ and $\frac{(e_2(I)-1)}{2}\big(e_2(I)-e_1(I)+e_0(I)-\ell ({R}/{I})\big)=-15$ where as $e_3(I)=0.$
\end{example}
Now we gather the cases where $e_3(I)\leq e_2(I)-e_1(I)+e_0(I)-\ell(R/I)$ holds. In part \eqref{part-2}, we recover  \cite[Theorem 2.8]{mf2}.
\begin{corollary}\label{final-conclusion}
Let $(R,\m)$ be a Cohen-Macaulay local ring of dimension $d\geq 3$ and $I$ an $\m$-primary integrally closed ideal. Then $e_3(I)\leq e_2(I)-e_1(I)+e_0(I)-\ell(R/I)$ if one of the following conditions hold: 
\begin{enumerate}
    \item $r_J(I)\leq 3$.
    \item\label{part-2} $e_2(I)-e_1(I)+e_0(I)-\ell (R/I)\leq  1.$
    \item $e_1(I)-e_0(I)+\ell(R/I)\leq 2.$
    \item $e_2(I)\leq 3.$
\end{enumerate}
Furthermore, if $e_1(I)-e_0(I)+\ell (R/I)=2$, then $e_3(I)= e_2(I)-e_1(I)+e_0(I)-\ell ({R}/{I}).$
\end{corollary}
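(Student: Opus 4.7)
The plan is to deduce each of (1)--(4) from one of the three upper bounds already proved---Theorem \ref{T_{11}}, Theorem \ref{T-14}, and Corollary \ref{3.4}---combined with the observation that $a := e_2(I)-e_1(I)+e_0(I)-\ell(R/I) \geq 0$ whenever $I$ is integrally closed. First I would establish this non-negativity: by \cite[Lemma 11]{itoh} one can choose a superficial element $x\in I$ with $I^\prime:=I/(x)$ still integrally closed, and since integrally closed ideals are Ratliff--Rush closed, $\ell(\widetilde{I^\prime}/J^\prime)=\ell(I^\prime/J^\prime)$. The inequality in \eqref{the-equality-for-f-1} then becomes the equality
\[
a=\sum_{n\ge 2}(n-1)\,\ell_n,\qquad \ell_n:=\ell\bigl(\widetilde{(I^\prime)^{n+1}}/J^\prime\widetilde{(I^\prime)^{n}}\bigr)\ge 0,
\]
which makes $a\ge 0$ manifest.

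Once $a\geq 0$ is known, cases (1), (3), (4) are immediate. Each of the three upper-bound theorems has the form $e_3(I)\le \mu\cdot a$ where $\mu=\tfrac{r_J(I)-1}{2}$, $\tfrac{b}{2}$ or $\tfrac{e_2(I)-1}{2}$, with $b:=e_1(I)-e_0(I)+\ell(R/I)$. The respective hypothesis of (1), (3), (4) forces $\mu\le 1$, and multiplying by the non-negative $a$ yields $e_3(I)\le a$.

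Case (2) does not fit this template since $a\le 1$ alone bounds none of $r_J(I)$, $b$, $e_2(I)$, so it needs a direct argument. As $a$ is a non-negative integer with $a\le 1$, the identity $a=\sum_{n\ge 2}(n-1)\ell_n$ with $\ell_n\ge 0$ integers forces either all $\ell_n=0$ (when $a=0$) or $\ell_2=1$ with $\ell_n=0$ for $n\ge 3$ (when $a=1$). In both situations $\widetilde{e_3}(I^\prime)=\sum_{n\ge 2}\binom{n}{2}\ell_n$ equals $a$ on the nose, so Lemma \ref{lemma-value-of-e3} together with $b_I-s_I\le 0$ gives $e_3(I)=\widetilde{e_3}(I^\prime)+(b_I-s_I)\le a$.

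For the final \emph{furthermore} statement ($b=2\Rightarrow e_3(I)=a$), one direction is case (3). For the reverse, I would note that $b=2$ forces a rigid structure: Rossi's dimension-two bound \cite[Corollary 1.5]{r} gives $r_{J^\prime}(I^\prime)\le b+1=3$, whence $\widetilde{r}_{J^\prime}(I^\prime)\le 3$ by \cite[Theorem 2.1]{ms}, so $\ell_n=0$ for $n\ge 3$ and $\widetilde{e_3}(I^\prime)=\ell_2=a$ \emph{exactly}. Lemma \ref{lemma-value-of-e3} then gives $e_3(I)=a+(b_I-s_I)$. The hard part will be to promote this to the equality $e_3(I)=a$ by showing $b_I=s_I$ in this setting---equivalently, that the Ratliff--Rush filtration of $I$ behaves well modulo the superficial element. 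A natural route is to analyse the long exact sequence \eqref{sec_fun} under the vanishings $\ell_n=0$ for $n\ge 3$ and the integral closedness of $I$, or to establish that the Ratliff--Rush filtration behaves well and then invoke the converse direction of Theorem \ref{T-14}.
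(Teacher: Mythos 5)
Parts (1)--(4) of your argument are essentially the paper's own proof: cases (1), (3), (4) are read off from Theorem \ref{T_{11}}, Theorem \ref{T-14} and Corollary \ref{3.4} once one knows $a:=e_2(I)-e_1(I)+e_0(I)-\ell(R/I)\ge 0$, and your explicit derivation of $a=\sum_{n\ge 2}(n-1)\ell_n\ge 0$ is exactly the equality version of \eqref{the-equality-for-f-1} that the paper uses for integrally closed ideals; your direct treatment of case (2) (if $a\le 1$ then $\ell_n=0$ for $n\ge 3$, hence $\widetilde{e_3}(I^\prime)=a$ and $e_3(I)=\widetilde{e_3}(I^\prime)+(b_I-s_I)\le a$) is the same argument the paper packages as Lemma \ref{4.6} ($\widetilde{r}_{J^\prime}(I^\prime)\le a+2\le 3$) plus Lemma \ref{lemma-value-of-e3}. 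The only small omission there is that the identity and Lemma \ref{lemma-value-of-e3} are three-dimensional statements, so for $d\ge 4$ you should say explicitly that case (2) (and the nonnegativity of $a$) follows by induction after passing to $R/(x)$ with $I/(x)$ integrally closed, as the paper does.

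The genuine gap is the ``furthermore'' claim. You correctly reduce it to showing $e_3(I)\ge a$ when $b:=e_1(I)-e_0(I)+\ell(R/I)=2$, and you reach $e_3(I)=a+(b_I-s_I)$, but the decisive step $b_I=s_I$ is exactly what you leave open, and your first suggested route cannot succeed as stated: the vanishings $\ell_n=0$ for $n\ge 3$ together with integral closedness do not force the Ratliff--Rush filtration to behave well. Indeed, in Example \ref{eg-2-sec-6} the maximal ideal is integrally closed with $a=0$, so $\ell_n=0$ for all $n\ge 2$, yet $e_3(\m)=-1<a$, i.e.\ $b_I-s_I<0$ there; thus the hypothesis $b=2$ must be used in a stronger way than merely bounding $\widetilde{r}_{J^\prime}(I^\prime)$. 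The paper closes this gap by invoking the structure theorem of Ozeki--Rossi \cite[Theorem 4.5]{or}: for an integrally closed ideal, $e_1(I)-e_0(I)+\ell(R/I)=2$ forces $\depth G(I)\ge d-1$ (and $I^4=JI^3$), whence Marley's lower bound (Corollary \ref{cor-mar-gen}) gives $e_3(I)\ge a$, and equality follows from part (3). Your alternative suggestion (prove good behaviour of the Ratliff--Rush filtration and apply the converse direction of Theorem \ref{T-14}) would also work, but the only available proof of that good behaviour here again passes through the Ozeki--Rossi depth bound via Remark \ref{remark-R}, so some such external structural input is unavoidable and is missing from your proposal.
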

\begin{proof}
\begin{enumerate}
    \item  It follows from Theorem \ref{T_{11}} since $(r_J(I)-1)/2\leq 1$.
    \item  Suppose $d=3$. Let $x$ be a superficial element of $I$ such that $I^\prime=I/(x)$ is integrally closed in $R^\prime=R/(x)$. Then, by Lemma \ref{4.6}, $\widetilde{r}_{J^\prime}(I^\prime)\leq 3$. Hence, by Lemma \ref{lemma-value-of-e3} and  \eqref{tilde_e_3},  we get $e_3(I)\leq \widetilde{e}_3(I^\prime)\leq e_2(I)-e_1(I)+e_0(I)-\ell(R/I).$ For $d\geq 4$, the inequality follows by induction on $d$. 
    \item It follows from the bound in Theorem \ref{T-14}. 
    \item It follows from the bound in Corollary \ref{3.4}. 
\end{enumerate}
For the last part, suppose $e_1(I)-e_0(I)+\ell (R/I)=2$. Then, by \cite [Theorem 4.5]{or}, $\depth G(I)\geq d-1$ and $I^4=JI^3$. 
Hence by Corollary \ref{cor-mar-gen}, $e_3(I)\geq e_2(I)-e_1(I)+e_0(I)-\ell({R}/{I})$. 
\end{proof}
\section{Applications}\label{applications}
In this section, we write some interesting consequences of the theorems of last sections. \\

{\bf{I}}. In \cite[Theorem 6.2]{Put2}, Puthenpurakal proved that $e_2(I)=e_3(I)=\cdots=e_d(I)=0$ implies that the Ratliff-Rush filtration of $I$ behaves well modulo a superficial sequence. However, when $I$ is integrally closed, we know that $e_2(I)=0$ implies that $G(I)$ is Cohen-Macaulay by results of Itoh \cite[Corollary 13]{itoh}. In this context, we discuss two examples below. The first one demonstrate the case when $e_2(I)\neq 0$, $\depth G(I)=0$ but the Ratliff-Rush filtration of $I$ behaves well. The second example is an application of Theorem \ref{theorem-marley-gen} as well as Corollary \ref{3.4}.
\begin{example}\cite[Theorem 5.2]{or}\label{eg4}
Let $m\geq 0$, $d\geq 2$ and $k$ be an infinite field. Consider the power series ring $D=k[[\{X_j\}_{1\leq j\leq m}, Y, \{V_j\}_{1\leq j\leq d}, \{Z_j\}_{1\leq j\leq d}]]$ with $m+2d+1$ indeterminates and the ideal $\mathfrak{a}=[(X_j~|~1\leq j\leq m)+(Y)].[(X_j~|~1\leq j\leq m)+(Y)+(V_i~|~1\leq i\leq d)]+(V_iV_j~|~1\leq i,j\leq d, i\neq j)+(V_i^3-Z_iY~|~1\leq i\leq d).$ Define $R=D/\mathfrak{a}$ and $x_i,y,v_i,z_i$ denote the images of $X_i, Y, V_i, Z_i$ in $R$ respectively. Let $\mathfrak{m}=(x_j~|~1\leq j\leq m)+(y)+(v_j~|~1\leq j\leq d)+(z_j~|~1\leq j\leq d)$ be the maximal ideal in $R$ and $Q=(z_j~|~1\leq j\leq d).$ Then \begin{enumerate}
    \item $R$ is Cohen-Macaulay local ring with $\dim R=d,$
    \item $Q$ is a minimal reduction of $\m$ with $r_Q(\m)=3,$
    \item $\depth G(\m)=0,$ 
    \item $e_0(\m)=m+2d+2;$ $e_1(\m)=m+3d+2$; $e_2(\m)=d+1$ and $e_i(\m)=0$ for $3\leq i\leq d.$ 
    \end{enumerate}
    Thus $e_3(\m)=0=\frac{\big(r_Q(\m)-1\big)}{2}\big(e_2(\m)-e_1(\m)+e_0(\m)-\ell (R/\m)\big)$. When $d=3$, the Ratliff-Rush filtration of $\m$ behaves well modulo superficial element by Theorem \ref{T_{11}}.
\end{example} 
\begin{example}\cite[Example 3.5]{cpr}\label{eg-2-sec-6}
Let $R = k[[X,Y,Z,U,V,W]]/(Z^2,ZU,ZV,UV,YZ-U^3,XZ-V^3)$ be a three dimensional Cohen-Macaulay local ring and $X,Y,Z,U,V,W$ indeterminates. Let $x,y,z,u,v,w$ denote the corresponding images of $X,Y,Z,U,V,W$ in $R$ and $m=(x,y,z,u,v,w)$. Then $G(\m)$ has depth $1$ and 
$$
H(\m,t)= \frac{1+3t+3t^3-t^4}{{(1-t)}^3}
$$
which gives $e_2(\m)=3,e_1(\m)=8,e_0(\m)=6,\ell(R/\m)=1,e_3(\m)=-1$. Thus  $e_2(\m)-e_1(\m)+e_0(\m)-\ell (R/\m)=0$ and $e_3(\m)=-1.$
Therefore the Ratliff-Rush filtration of $\m$ does not behave well modulo superficial element by Theorem \ref{theorem-marley-gen} or by Corollary \ref{3.4}.
\end{example}
In \cite[Theorem 3.5]{Put2}, it was proved that if $d=2$, $I$ is integrally closed and $e_2(I)-e_1(I)+e_0(I)-\ell(R/I)=0$, then Ratliff-rush filtration of $I$ behaves well modulo superficial element. Example \ref{eg-2-sec-6} shows that the above result can not be extended in dimension three. \\

{\bf{II.}} We now prove a result on the signature of $e_4(I)$. In \cite{e}, Elias proved that $\depth G(I^n)$ is constant for $n\gg0$.
In the first part of Theorem \ref{thm-e4}, we give a necessary condition for $\depth G(I^n) \geq 3$ for $n\gg 0.$ The second part is a generalization of \cite[Theorem 2.5]{mf2} which was the case where $I$ is an asymptotically normal ideal. 
\begin{theorem}\label{thm-e4}
Let $(R,\m)$ be a Cohen-Macaulay local ring of dimension $d=4$. Let $I$ be an $\m$-primary ideal of $R$ and $J$ a minimal reduction of $I$. 
\begin{enumerate}
    \item Suppose $\depth G(I^q)\geq 3$ for some $q\geq \eta(I)$. Then 
    $e_4(I)\geq 0.$
\item Suppose $I^q$ is integrally closed for some $q\geq \eta(I)$ and $r_J(I)\leq 3$. Then $e_4(I)\leq 0$.
\end{enumerate}
\end{theorem}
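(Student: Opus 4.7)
Both parts of Theorem \ref{thm-e4} begin with the identity $e_4(I) = e_4(I^q)$, valid for every $q \geq 1$. Indeed, $P_{I^q}(n) = P_I(qn)$ evaluated at $n = 0$ yields this after expanding $P_I(0) = \sum_{i=0}^d (-1)^i e_i(I)\binom{d-1-i}{d-i} = (-1)^d e_d(I)$: every binomial with $i < d$ has upper index strictly below the lower and vanishes, while $\binom{-1}{0} = 1$ supplies the $i = d$ term. Thus each part reduces to controlling $e_4(I^q)$ under the respective hypotheses, the condition $q \geq \eta(I)$ placing us in the stable asymptotic regime where $G(I^q)$ reliably records the Hilbert data.

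For part (1), the hypothesis $\depth G(I^q) \geq d - 1 = 3$, together with the iterative application of Remark \ref{remark-R}(\ref{remark-RR-2}) across dimensions, implies that the Ratliff-Rush filtration of $I^q$ behaves well modulo a superficial sequence of length $d - 2 = 2$: one descent gives $\depth G((I^q)^\prime) \geq 2$ in dimension three, and another gives a dim-two quotient in which Ratliff-Rush coincides with the $I$-adic filtration. The dimension-four analog of the proof of Theorem \ref{theorem-marley-gen}, obtained by iterating the telescoping argument behind \eqref{the-equality-for-f-1} one more step and using the dim-four version $\widetilde{e_4}(\mathcal{I}) = \sum_{n \geq 3}\binom{n}{3}\ell(\widetilde{I_{n+1}}/J\widetilde{I_n})$ of \eqref{formula-for-e3tilde}, would then give $e_4(I^q) \geq e_3(I^q) - e_2(I^q) + e_1(I^q) - e_0(I^q) + \ell(R/I^q)$. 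Marley's bound \cite{tm}, applied under $\depth G(I^q) \geq d - 1$, gives $e_3(I^q) \geq e_2(I^q) - e_1(I^q) + e_0(I^q) - \ell(R/I^q)$, so the right-hand side is nonnegative. Hence $e_4(I^q) \geq 0$, and $e_4(I) \geq 0$.

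For part (2), the relation $r_J(I) \leq 3$ gives $I^{n+1} = JI^n$ for $n \geq 3$, which iterates to $I^{qn+q} = J^q I^{qn}$ whenever $qn \geq 3$. When $q \geq 3$ this forces $(I^q)^2 = J^q I^q$, and a general four-element subideal $K \subseteq J^q$ is then a minimal reduction of $I^q$ with $KI^q = J^q I^q = (I^q)^2$, so $r_K(I^q) \leq 1$. By Valabrega-Valla, $G(I^q)$ is Cohen-Macaulay with $\deg h_{I^q} \leq 1$, forcing $e_i(I^q) = 0$ for all $i \geq 2$; in particular $e_4(I) = 0 \leq 0$.

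The principal obstacle is the range $q \in \{1, 2\}$, which is allowed when $\eta(I) \leq 2$: here $r_{J^q}(I^q) \leq \lceil 3/q \rceil$ can still equal $3$, and the Valabrega-Valla shortcut is unavailable. The plan then is to use the integrally closedness of $I^q$ to select, via Itoh's lemma, a superficial element preserving integral closure modulo it, descend iteratively to dimension three, and combine the dim-three Ratliff-Rush bounds of Section \ref{results-2} applied to $I^q$ with the dim-four analog of \eqref{formula-for-e3tilde} in order to compress both the $\widetilde{e_4}$ summation and the accompanying boundary defects so as to force $e_4(I^q) \leq 0$.
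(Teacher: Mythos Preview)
Your proposal misses the central device of the paper's argument: the identity
\[
e_4(I)=e_3(I^q)-e_2(I^q)+e_1(I^q)-e_0(I^q)+\ell(R/I^q),
\]
valid precisely because $q\geq \eta(I)$ (so that $\ell(R/I^q)=P_I(q)$). The paper obtains this by writing $P_{I^q}(n)=P_I(qn)$, expanding both sides, and comparing coefficients; the trivial consequence $e_4(I)=e_4(I^q)$ that you use is only the constant-term piece of this comparison. With the full identity in hand, part (1) is immediate from Corollary~\ref{cor-mar-gen} applied to $I^q$, and part (2) follows uniformly for all $q\geq\eta(I)$ from Theorem~\ref{T_{11}} applied to $I^q$ together with Hoa's bound $r(I^q)\leq 3+\lceil(r(I)-3)/q\rceil$ from \cite[Lemma~2.7]{h}. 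Your handwave that $q\geq\eta(I)$ merely ``places us in the stable asymptotic regime'' overlooks exactly where this hypothesis does real work.

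For part~(1), your route through a ``dimension-four analog of Theorem~\ref{theorem-marley-gen}'' is not in the paper and you do not prove it; establishing $e_4(I^q)=\widetilde{e_4}$ of a two-dimensional quotient requires controlling \emph{two} layers of correction terms (analogues of $b_I-s_I$), which is more than the one-step argument of Lemma~\ref{lemma-value-of-e3} gives you. This might be salvageable, but it is both unproven and unnecessary once the displayed identity is available.

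For part~(2), your argument has a genuine gap. From $(I^q)^2=J^qI^q$ you conclude that a general four-element ideal $K\subseteq J^q$ satisfies $KI^q=J^qI^q$; this is false in general. The containment $KI^q\subseteq J^qI^q$ is automatic, but the reverse would require the four generators of $K$ to generate $J^qI^q$ modulo $\m J^qI^q$, which has no reason to be four-dimensional (taking $L=I^q$ already shows that the existence of \emph{some} reduction with reduction number one says nothing about minimal reductions). Moreover, for $q\in\{1,2\}$ you offer only a ``plan'', not a proof. The paper avoids all of this by applying Theorem~\ref{T_{11}} directly to the integrally closed ideal $I^q$: the identity above gives
\[
e_4(I)=\epsilon_3-(\epsilon_2-\epsilon_1+\epsilon_0-\ell(R/I^q))\leq \tfrac{r(I^q)-3}{2}\bigl(\epsilon_2-\epsilon_1+\epsilon_0-\ell(R/I^q)\bigr),
\]
and Hoa's lemma makes the factor $\leq 0$ once $r_J(I)\leq 3$, with no case distinction on $q$.
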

\begin{proof}
 Let $q\geq \eta(I)$. We have   
\begin{equation}\label{hs1}
    \ell(R/I^n )= e_0(I){n+3 \choose 4}-e_1(I){n+2 \choose 3}+e_2(I){n+1 \choose 2}-e_3(I){n \choose 1}+e_4(I)
\end{equation}
for $n\geq \eta(I)$ and  
\begin{equation}\label{hs2}
 \ell(R/(I^q)^n )= \epsilon_0{n+3 \choose 4}-\epsilon_1{n+2 \choose 3}+\epsilon_2{n+1 \choose 2}-\epsilon_3{n \choose 1}+\epsilon_4   
\end{equation}
for $n\gg 0$ where $\epsilon_i=e_i(I^q)$ for $0\leq i\leq 4$. Since $\ell (R/(I^q)^n)=\ell (R/I^{qn})$, using  \eqref{hs1} and \eqref{hs2}, we get   
\begin{align*}
\ell(R/I^{qn})&=e_0(I){nq+3 \choose 4}-e_1(I){nq+2 \choose 3}+e_2(I){nq+1 \choose 2}-e_3(I){nq \choose 1}+e_4(I)\\
&=\epsilon_0{n+3 \choose 4}-\epsilon_1{n+2 \choose 3}+\epsilon_2{n+1 \choose 2}-\epsilon_3{n \choose 1}+\epsilon_4 
\end{align*}
for $n\gg 0$. On comparing the coefficients, we get 
\begin{align*}
 \epsilon_0=e_0(I)q^4,~ \epsilon_1=\frac{3}{2}e_0(I)(q^4-q^3)+e_1(I)q^3,~\epsilon_2=\frac{e_0(I)}{12}(11q^2+7q^4-18q^3)+e_1(I)(q^3-q^2)+e_2(I)q^2,\\
\epsilon_3=e_0(I){q \choose 4}+e_1(I){q \choose 3}+e_2(I){q \choose 2}+e_3(I){q \choose 1} \mbox{ and }  \epsilon_4=e_4(I).
\end{align*}
 Therefore, 
\begin{align*}
     \epsilon_3-\epsilon_2+\epsilon_1-\epsilon_0+\ell (R/I^q)=& -e_0(I){q+3 \choose 4}+e_1(I){q+2 \choose 3}-e_2(I){q+1 \choose 2}+e_3(I){q \choose 1}+\ell (R/I^q)\\
     =&e_4(I).
\end{align*}
since $q\geq \eta(I)$. Now suppose $\depth G(I^q)\geq 3.$ Then $e_4(I)=\epsilon_3-(\epsilon_2-\epsilon_1+\epsilon_0-\ell(R/I^q))\geq 0$ by Corollary \ref{cor-mar-gen}. 

For the second part, suppose $I^q$ is integrally closed. 
Using Theorem \ref{T_{11}}, we have
\begin{align*}
 e_4(I)=&\epsilon_3-(\epsilon_2-\epsilon_1+\epsilon_0-\ell(R/I^q))\\
 &\leq \frac{(r(I^q)-3)}{2}\Big(\epsilon_2-\epsilon_1+\epsilon_0-\ell(R/I^q)\Big)\\
 &\leq \frac{1}{2}\Big(\ceil{(r(I)-3)/q}\Big)(\epsilon_2-\epsilon_1+\epsilon_0-\ell(R/I^q))
\end{align*} 
 where the last inequality follows from \cite[Lemma 2.7]{h}. Here, $\ceil{x}:=\min\{m\in\mathbb{Z}~|~m\geq x\}.$ Now if $r_J(I)\leq 3$, then $\ceil{{r(I)-3}/q}\leq 0$ and the result follows. 
\end{proof}
\begin{example} \cite[Example 3.1]{mf2}
Let $R=k[x,y,z,u]_{(x,y,z,u)}$ and $I =(x^3,y^3,z^3,u^3,xy^2,yz^2,zu^2,xyz,xyu).$ Then $J=(x^3,y^3,z^3,u^3)$ is a minimal reduction of $I$ with $r_J(I)=4$. Moreover,  $\depth G(I) =2,$ the Hilbert series is
$$
H(I,t)=\frac{33+19t+21t^2+7t^3+5t^4-3t^5-t^6}{(1-t)^4}
$$
and the Hilbert polynomial is
$$
P_I(n)=81{n+3 \choose 4}-81{n+2 \choose3}+27{n+1\choose 2}+23{n\choose1}-50.
$$
Therefore, $e_0(I)=81,~e_1(I)=81,~e_2(I)=27,
~e_3(I)=-23,~e_4(I)=-50.$ Hence, using Theorem \ref{thm-e4}, we have $2=\depth G(I)\leq \depth G(I^n)\leq 2$ for $n\gg 0$. 
\end{example}
A similar calculation in dimension three, as done in the proof of Theorem \ref{thm-e4},  gives the equality 
\begin{equation}\label{eqn-e3}
    e_3(I)=e_2(I^q)-e_1(I^q)+e_0(I^q)-\ell(R/I^q)
\end{equation}
 for $q\geq \eta(I)$. We refer to the proof of Theorem 4.1 in  \cite{cpr} for detailed calculation. We partially recover  \cite[Corollary 4.5]{cpr} below. 
\begin{proposition}\label{cpr recover}
Let $(R,\m)$ be a Cohen-Macaulay local ring of dimension $d=3$ and $I$ an $\m$-primary  ideal. Suppose $I^q$ is integrally closed for some $q\geq \eta(I)$ and $r(I^q)\leq 2$. Then $e_3(I)=0$. 
\end{proposition}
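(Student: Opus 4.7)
The plan is to reduce the vanishing statement to a length computation in dimension two via the identity \eqref{eqn-e3}, which reads $e_3(I)=A$ with
$$A := e_2(I^q)-e_1(I^q)+e_0(I^q)-\ell(R/I^q).$$
Thus the task becomes showing $A=0$.

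Using \cite[Lemma 11]{itoh} together with the standard genericity of superficial sequences, I would pick a superficial element $x\in I^q$ so that (i) $(I^q)' := I^qR'$ is integrally closed in $R'=R/(x)$, and (ii) $x$ extends to a minimal reduction $J'=(x,y_2,y_3)$ of $I^q$ realising $r_{J'}(I^q)\leq 2$. Setting $J''=(y_2,y_3)R'$, a minimal reduction of $(I^q)'$ in the two-dimensional ring $R'$, one has $r_{J''}((I^q)')\leq r_{J'}(I^q)\leq 2$.

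Next I would rerun the chain of identities that appears in the proof of Theorem \ref{theorem-marley-gen}. The single ``$\leq$'' step there comes from $\ell(\widetilde{(I^q)'}/J'')\geq \ell((I^q)'/J'')$; since $(I^q)'$ is integrally closed, it is in particular Ratliff--Rush closed, so this becomes an equality and the chain collapses to
$$A \;=\; \sum_{n\geq 2}(n-1)\,\ell\bigl(\widetilde{((I^q)')^{n+1}}/J''\widetilde{((I^q)')^{n}}\bigr).$$
Applying \cite[Theorem 2.1]{ms} in the two-dimensional ring $R'$ yields $\widetilde{r}_{J''}((I^q)')\leq r_{J''}((I^q)')\leq 2$, which forces every summand on the right to vanish. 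Hence $A=0$ and consequently $e_3(I)=0$.

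The only delicate point is the simultaneous choice of $x$: we need it both to produce an integrally closed quotient (Itoh) and to lie in a minimal reduction realising $r(I^q)\leq 2$. Both properties hold on Zariski-dense subsets of $I^q/\m I^q$, so the infinite residue field hypothesis guarantees their intersection is nonempty; with such an $x$ fixed, the remainder is a direct specialisation of the two-dimensional Ratliff--Rush machinery developed in Section \ref{results-1}.
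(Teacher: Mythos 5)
Your route is sound and it is genuinely different from the paper's. The paper's proof is a two-line deduction: apply Theorem \ref{T_{11}} to the integrally closed ideal $I^q$ and use \eqref{eqn-e3} on both ends to get $e_3(I)=e_3(I^q)\leq \frac{r(I^q)-1}{2}\,e_3(I)$, and then conclude from $r(I^q)\leq 2$ (strictly speaking that contradiction only disposes of $e_3(I)>0$; ruling out $e_3(I)<0$ needs the nonnegativity of $e_2(I^q)-e_1(I^q)+e_0(I^q)-\ell(R/I^q)$, which your argument delivers for free). You instead inline the two-dimensional machinery behind Theorem \ref{T_{11}}: after passing to $R'=R/(x)$ with $(I^q)R'$ integrally closed, the chain \eqref{the-equality-for-f-1} collapses to an equality, and $\widetilde{r}_{J''}((I^q)')\leq r_{J''}((I^q)')\leq 2$ (via \cite{ms}) annihilates every summand, so the quantity in \eqref{eqn-e3} vanishes outright. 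This is a correct and slightly more informative variant -- essentially a specialization of the proof of Theorem \ref{T_{11}} rather than an appeal to it -- and it avoids the sign issue above.

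The step you must tighten is the simultaneous choice of $x$. ``Both properties hold on Zariski-dense subsets, hence their intersection is nonempty'' is not a valid principle: two dense sets can be disjoint. What you need is that both loci contain nonempty Zariski-open subsets of $I^q/\m I^q$; since the residue field is infinite, such open sets do meet. For the reduction condition this is true but requires an argument: by Nakayama, the condition $J(I^q)^2=(I^q)^3$ depends only on the images of the generators of $J$ in $I^q/\m I^q$ and is open there, it is nonempty because $r(I^q)\leq 2$, and projecting to the first generator shows that a general $x$ extends to such a $J$ (a general minimal generator of such a $J$ is moreover superficial, since no relevant associated prime of $G(I^q)$ can contain the degree-one image of a reduction). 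For the Itoh condition you need the general-element form of \cite[Lemma 11]{itoh}, not bare existence of one good $x$. Alternatively, you can sidestep the compatibility question entirely, exactly as the paper does, by citing Theorem \ref{T_{11}} with $J$ a minimal reduction realizing $r(I^q)\leq 2$, since that theorem already packages the two-dimensional computation you redo here.
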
 
\begin{proof}
By \eqref{eqn-e3} and Theorem \ref{T_{11}}, 
\begin{align*}
e_3(I)=e_3(I^q)&\leq \frac{(r(I^q)-1)}{2}\Big(e_2(I^q)-e_1(I^q)+e_0(I^q)-\ell(R/I^q)\Big)=\frac{(r(I^q)-1)}{2} e_3(I).
\end{align*}
Suppose $e_3(I)\neq 0$. Then we get  $r(I^q)\geq 3$ which is a contradiction.  
\end{proof}

{\bf{III.}} Next, we prove an upper bound for the reduction number $r_J(I)$ of an integrally closed ideal $I$. If $\depth G(I)\geq d-2,$ then by \cite[Corollary 1.5]{r}, we have
\begin{equation}
r_J(I)\leq e_1(I)-e_0(I)+\ell(R/I)+1. \nonumber  
\end{equation}
We prove the following result. 
\begin{theorem}\label{improvment ms result}
Let $(R,\m)$ be a Cohen-Macaulay local ring of dimension $d\geq 3$ and $I$ an integrally closed $\m$-primary ideal. Suppose $\depth G(I)\geq d-3$. Then 
\begin{align}
    r_J(I)\leq e_1(I)-e_0(I)+\ell (R/I)+1+e_2(I)(e_2(I)-e_1(I)+e_0(I)-\ell (R/I))-e_3(I).\nonumber
\end{align}
\end{theorem}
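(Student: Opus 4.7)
My plan is to proceed in three steps: reduce to the case $d=3$; apply the Ratliff-Rush form of Rossi's bound \eqref{Rossi's result} to convert the statement into a bound on $\sum_{n\geq 0}v_n(\mathcal{F})$, where $\mathcal{F}=\{\widetilde{I^n}\}$; and then pass to a two-dimensional quotient, where closed-form identities become available.

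For the dimension reduction, the hypothesis $\depth G(I)\geq d-3$ guarantees that, when $d\geq 4$, a sufficiently general superficial element $x\in I$ satisfies both (i) $x^{*}\in G(I)_1$ is regular on $G(I)$, whence $e_i(I)$ for all $i$, $\ell(R/I)$ and $r_J(I)$ are preserved modulo $x$ and $\depth G(I/(x))\geq (d-1)-3$, and (ii) $I/(x)$ is integrally closed in $R/(x)$, by \cite[Lemma 11]{itoh}. Iterating this construction reduces the problem to the case $d=3$.

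With $d=3$, \eqref{Rossi's result} shows that it suffices to prove
\[
\sum_{n\geq 0}v_n(\mathcal{F})\leq e_1(I)+e_2(I)\cdot B-e_3(I),\qquad B:=e_2(I)-e_1(I)+e_0(I)-\ell(R/I).
\]
Choose $x\in I$ superficial with $I':=IR'$ integrally closed in $R':=R/(x)$ and set $J':=JR'$. For $\mathcal{F}'=\{\widetilde{(I')^n}\}$, we have $\depth G(\mathcal{F}')\geq 1$; combined with \eqref{formula-for-e3tilde} and the equality in \eqref{the-equality-for-f-1} (which holds because $I'$ is integrally closed), this yields
\[
e_1(I)=\sum_{n\geq 0}v_n(\mathcal{F}'),\ e_2(I)=\sum_{n\geq 1}nv_n(\mathcal{F}'),\ \widetilde{e_3}(I')=\sum_{n\geq 2}\tbinom{n}{2}v_n(\mathcal{F}'),\ B=\sum_{n\geq 2}(n-1)v_n(\mathcal{F}').
\]
Together with Lemma \ref{lemma-value-of-e3}, which gives $e_3(I)=\widetilde{e_3}(I')+b_I-s_I$ and $s_I\geq b_I$, the right-hand side of the target inequality rewrites as $\sum v_n(\mathcal{F}')+C+(s_I-b_I)$, where
\[
C:=\Bigl(\sum_{n\geq 1}nv_n(\mathcal{F}')\Bigr)\Bigl(\sum_{m\geq 2}(m-1)v_m(\mathcal{F}')\Bigr)-\sum_{n\geq 2}\tbinom{n}{2}v_n(\mathcal{F}').
\]
A direct expansion gives $C=\sum_{n\geq 2}\tbinom{n}{2}v_n(\mathcal{F}')(2v_n(\mathcal{F}')-1)+\sum_{1\leq n<m}[n(m-1)+m(n-1)]v_n(\mathcal{F}')v_m(\mathcal{F}')\geq 0$.

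The main obstacle is the remaining inequality $\sum v_n(\mathcal{F})\leq \sum v_n(\mathcal{F}')+C+(s_I-b_I)$. When the Ratliff-Rush filtration of $I$ behaves well modulo $x$ (equivalently $b_I=s_I$, by Remark \ref{remark-R}), one has $v_n(\mathcal{F})=v_n(\mathcal{F}')$ for each $n$ and the inequality is immediate. In general, the non-negative defect $\sum v_n(\mathcal{F})-\sum v_n(\mathcal{F}')$ is governed by the cohomological obstruction $s_I-b_I$ via the exact sequence \eqref{sec_fun}. The precise bookkeeping, translating \eqref{sec_fun} and the inclusion $(\widetilde{I^n}+(x))/(x)\subseteq\widetilde{(I')^n}$ into effective bounds on the individual lengths $\ell(\widetilde{I^{n+1}}/J\widetilde{I^n})$, is the technical heart of the proof.
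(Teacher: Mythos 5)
Your reduction to $d=3$ (via Itoh's Lemma 11 plus a superficial element regular on $G(I)$), your use of \eqref{Rossi's result}, and your closed-form identities $e_1(I)=\sum v_n(\mathcal{F}')$, $e_2(I)=\sum n v_n(\mathcal{F}')$, $\widetilde{e_3}(I')=\sum\binom{n}{2}v_n(\mathcal{F}')$, $B=\sum(n-1)v_n(\mathcal{F}')$ are all correct and in fact parallel the opening of the paper's argument; your computation $C\geq 0$ is also correct. But the proof stops exactly where the theorem actually lives: the inequality $\sum_{n\geq 0}v_n(\mathcal{F})\leq \sum_{n\geq 0}v_n(\mathcal{F}')+C+(s_I-b_I)$, equivalently $\sum_{n\geq 0}v_n(\mathcal{F})-e_1(I)\leq e_2(I)B-e_3(I)$, is precisely the content of the theorem once \eqref{Rossi's result} is invoked, and declaring that it is ``governed by'' \eqref{sec_fun} with bookkeeping left to do is not an argument. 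In particular, \eqref{sec_fun} only produces the single number $s_I-b_I$ and gives no mechanism by which the product $e_2(I)\cdot B$ could enter; the well-behaved case $b_I=s_I$ that you do settle is exactly the case already covered by Remark \ref{re4}, where Rossi's bound itself holds and the extra term is unnecessary.

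For comparison, the paper closes this gap by a different mechanism, working with the image filtration $\mathcal{F}'=\{(\widetilde{I^n}+(x))/(x)\}$ rather than the Ratliff--Rush filtration of $I'$: (i) since $\depth G(\mathcal{F})\geq 1$, $v_n(\mathcal{F})=v_n(\mathcal{F}')$, and the Huckaba--Marley formula \eqref{eq-ei}--\eqref{eq-delta} gives $\sum v_n(\mathcal{F})-e_1(I)=\sum_n\ell\big(H_2(C_.(\mathcal{F}',n))\big)$; (ii) $H_2(C_.(\mathcal{F}',n))=\big(\mathcal{F}'_{n-1}:(y',z')\big)/\mathcal{F}'_{n-2}\subseteq \widetilde{\mathcal{F}'_{n-2}}/\mathcal{F}'_{n-2}$, so the sum is at most $\widetilde{e}_3(\mathcal{F}')-e_3(I)$; (iii) via Blancafort's identity $P-H=\ell\big((H^2_{\mathcal{R}_+}(\mathcal{R}(\widetilde{\mathcal{F}'})))_{n+1}\big)$, the monotonicity of these lengths, $\ell\big((H^2_{\mathcal{R}_+})_0\big)=e_2(I)$ and $a_2(G(\widetilde{\mathcal{F}'}))=\widetilde{r}(\mathcal{F}')-2$, one gets $\widetilde{e}_3(\mathcal{F}')\leq(\widetilde{r}(\mathcal{F}')-2)e_2(I)$, and finally Lemma \ref{4.6} (applicable because $\mathcal{F}'_1=I'$ is integrally closed) bounds $\widetilde{r}(\mathcal{F}')-2\leq B$. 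Some argument of this type (or a genuine substitute for steps (i)--(iii)) is indispensable; without it your proposal does not prove the stated bound.
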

 Our proof is an improvement of the methods discussed in \cite[Theorem 4.1]{ms}.
We briefly recall some ideas from \cite{ms}. As in \cite{gr}, the second Hilbert function of $\mathcal I$ is defined as $H^2_{\mathcal I}(n)=\mathop\sum\limits_{i=0}^n H_{\mathcal I}(i)$ and the second Hilbert polynomial, denoted by $P^2_{\mathcal I}(n)$ is the polynomial which coincides with $H^2_{\mathcal I}(n)$ for large values of $n$. Let $\underline{x}$ denote a sequence $x_1,\ldots,x_m$ in $R$ and $n$ be an integer. We define modified Koszul complex $C_.(\underline x,\mathcal{I},n)$ as in \cite{hm} by induction on $m$: For $m=1,$  
  $C_.(x_1,\mathcal{I},n)$ is the complex $$0\longrightarrow R/I_{n-1}\overset{x_1}\longrightarrow R/I_n\longrightarrow 0$$  
  and for $m>1$,  consider the chain map $f: C_.(x_1,\ldots,x_{m-1},\mathcal{I},n-1)\longrightarrow C_.(x_1,\ldots,x_{m-1},\mathcal{I},n)$ defined as multiplication by $x_m$. Define
$C_.(x_1,\ldots,x_{m},\mathcal{I},n)$ to be the mapping cylinder of $f$. Then  $C_.(x_1,\ldots,x_{m},\mathcal{I},n)$
has the following form
$$0\longrightarrow R/{I_{n-m}}\longrightarrow (R/{I_{n-m+1}})^m\longrightarrow (R/{I_{n-m+2}})^{{m\choose 2}}
\longrightarrow \quad ...\quad \longrightarrow R/{I_n}\longrightarrow 0.$$
For convenience of notation, we write ${C_.}(\mathcal{I},n)$ $ = {C_.}({x_1},{x_2},...,{x_m},\mathcal{I},n)$ and ${C_.}^\prime (\mathcal{I},n) = {C_.}({x_1},{x_2},...,{x_{m - 1}},\mathcal{I},n)$ when the value of $m$ is clear. For any integer $n$, there is an
exact sequence of complexes
$$0\longrightarrow {C_.}^\prime(\mathcal{I},n)\longrightarrow {C_.}(\mathcal{I},n)\longrightarrow{C_.}^\prime (\mathcal{I},n-1)[- 1]\longrightarrow 0$$
which induces a long exact sequence of homology modules
\begin{equation}
\cdots\longrightarrow {H_i}({C_.}^\prime(\mathcal{I},n))\longrightarrow {H_i}({C_.}(\mathcal{I},n))\longrightarrow {H_{i-1}}({C_.}^\prime (\mathcal{I},n - 1)){\kern 1 pt}\; \stackrel{\underline{+}x_m}{\longrightarrow} {H_{i - 1}}({C_.}^\prime (\mathcal{I},n)) \longrightarrow\cdots. \nonumber
\end{equation}
 We recall the following formula from \cite[\S 4.]{hm}. For each $n$ and $1\leq i \leq d$,
\begin{equation}\label{eq-ei}
{e_i}(\mathcal{I})=\sum\limits_{n=i}^\infty{\left({{}_{i-1}^{n-1}} \right)}\;{\Delta ^d}[{P_\mathcal{I}}(n-d)-{H_\mathcal{I}}(n-d)] \;\mbox{ and }\end{equation}
\begin{equation}\label{eq-delta}
{\Delta^d}[{P_\mathcal{I}}(n-d)-{H_\mathcal{I}}(n-d)]=\ell({{I_{n}}}/{J{I_{n-1}}})-\sum\limits_{i=2}^d {{{(-1)}^i}\ell({H_i}({C_.}(\mathcal{I},n)))}\end{equation}
where $J=(x_1,\ldots,x_d)$ is a minimal reduction of $\mathcal{I}$ and ${H_i}({C_.}(\mathcal{I},n))={H_i}({C_.}(x_1,\ldots,x_d,\mathcal{I},n)).$

\begin{proof}[Proof of Theorem \ref{improvment ms result}]
Suppose $d=3$. Let $\mathcal F=\{\widetilde{I^n}\}$ and $x\in I$ be a superficial element for $I$ such that $I^\prime$ is integrally closed. Then $x$ is also superficial for the filtration $\mathcal F$. Suppose $J=(x,y,z)$ is a minimal reduction of $I$. Let $R^\prime=R/(x)$ and ${\mathcal F}^\prime=\{{\mathcal F}^\prime_n=\frac{\widetilde{I^n}+(x)}{(x)}\}$. Since $\depth G(\mathcal F)\geq 1$, we have $v_n(\mathcal F)=v_n({\mathcal F}^\prime)$.
By using \eqref{eq-ei} and \eqref{eq-delta}, we get 
\begin{eqnarray}
e_1({\mathcal F}^\prime)
&=& \sum_{n\geq 0}v_n({\mathcal F}^\prime)-\sum_{n\geq 1}\ell(H_2(C_.({\mathcal F}^\prime,n))) .\label{eq}
\end{eqnarray}
Since $x$ is a superficial element for $\mathcal F$,  $e_1(I)=e_1(\mathcal F)=e_1({\mathcal F}^\prime)$.
Hence, by using \eqref{Rossi's result} and (\ref{eq}), we get 
\begin{equation}\label{general-stat2}
    r_J(I)\leq e_1(I)+\sum_{n\geq 1}\ell(H_2(C_.({\mathcal F}^\prime,n)))-e_0(I)+\ell(R/I)+1.
    \end{equation}
By \cite[Lemma 3.2]{hm}, we have  $H_2(C.({\mathcal F}^\prime,n))=\frac{{\mathcal F}^\prime_{n-1}:( y^\prime, z^\prime)}{{\mathcal F}^\prime_{n-2}}$.
Since ${\mathcal F}^\prime_{n-1}:( y^\prime, z^\prime)\subseteq \widetilde{{\mathcal F}^\prime_{n-2}}$,  $$\ell(H_2(C.({\mathcal F}^\prime,n)))\leq \ell\left (\frac{\widetilde{{\mathcal F}^\prime_{n-2}}}{{\mathcal F}^\prime_{n-2}}\right ).$$
Therefore, for $m\gg 0$, we have
\begin{eqnarray*}
0\leq \sum_{n= 0}^m \ell(H_2(C.({\mathcal F}^\prime,n))) &\leq&  \sum_{n= 0}^m \ell \left(\frac{\widetilde{{\mathcal F}^\prime_{n-2}}}{{\mathcal F}^\prime_{n-2}}\right )\\
&=&   \sum_{n= 0}^m \ell( R^\prime/{\mathcal F}^\prime_{n-2})-\sum_{n=0}^m\ell( R^\prime/\widetilde{{\mathcal F}^\prime_{n-2}})\\
&=& \widetilde{e}_3({\mathcal F}^\prime) -e_3({\mathcal F}^\prime)\\
&=&   \widetilde{e}_3({\mathcal F}^\prime) -e_3(\mathcal F) ~~\mbox{ (by \cite[Proposition 1.5]{gr})}\\
&=& \widetilde{e}_3({\mathcal F}^\prime) -e_3(I)
\end{eqnarray*}
This gives
\begin{equation}\label{e3}
0\leq \sum_{n\geq 0}\ell(H_2(C_.({\mathcal F}^\prime,n)))\leq \widetilde{e}_3( {\mathcal F}^\prime)-e_3(I).
\end{equation}
From (\ref{general-stat2}) and (\ref{e3}), we get
\begin{equation}\label{3}
r_J(I)\leq e_1(I)-e_0(I)+\ell(R/I)+1+\widetilde{e}_3({\mathcal F}^\prime)-e_3(I).
\end{equation}
By \cite[Proposition 4.4]{bl},  we have
for all $n\geq -1$,
\begin{equation} \label{diff}
P_{\widetilde{{\mathcal F}^\prime}}(n)-H_{\widetilde{{{\mathcal F}^\prime}}} (n)=\ell((H^2_{\mathcal R_+}(\mathcal R(\widetilde{{ {\mathcal F}^\prime}})))_{n+1}). 
\end{equation}
Now taking sum for $m\gg 0$ on both sides of the above equation, we get
\begin{eqnarray*}
\sum_{n=0}^m \ell((H^2_{\mathcal R_+}(\mathcal R(\widetilde{{ {\mathcal F}^\prime}})))_{n+1})&=& \sum_{n=0}^m P_{\widetilde{{\mathcal F}^\prime}}(n)-\sum_{n=0}^m H_{\widetilde{{{\mathcal F}^\prime}}} (n)\\
&=& \sum_{n=0}^m P_{\widetilde{{\mathcal F}^\prime}}(n)- H^2_{\widetilde{{{\mathcal F}^\prime}}} (m)
\\
&=& \widetilde{e}_0({{\mathcal F}^\prime}){m+3\choose 3}-\widetilde{e}_1({{\mathcal F}^\prime}){m+2\choose 2}+\widetilde{e}_2({{\mathcal F}^\prime}){m+1\choose 1}- P^2_{\widetilde{{{\mathcal F}^\prime}}} (m)\\
&=&\widetilde{e}_3({\mathcal F}^\prime).
\end{eqnarray*}
Since $R^\prime$ is a $2$-dimensional Cohen-Macaulay local ring,  we have $\ell((H^2_{\mathcal R_+}(\mathcal R(\widetilde{{ {\mathcal F}^\prime}})))_n)\leq \ell((H^2_{\mathcal R_+}(\mathcal R(\widetilde{{ {\mathcal F}^\prime}})))_{n-1})$ for all $n\in \mathbb Z$ by \cite[Lemma 4.7]{bl}. Now in equation \eqref{diff}, we substitute $n=-1$ to get $$\ell((H^2_{\mathcal R_+}(\mathcal R(\widetilde{{ {\mathcal F}^\prime}})))_0)=\widetilde{e}_2({ {\mathcal F}^\prime})=e_2({\mathcal F}^\prime)=e_2(\mathcal F)=e_2(I)$$
Therefore, 
\begin{equation}
\widetilde{e}_3( {\mathcal F}^\prime)=\sum_{n=0}^m \ell((H^2_{\mathcal R_+}(\mathcal R(\widetilde{{ {\mathcal F}^\prime}})))_{n+1})\leq\sum_{n=0}^ {a_2(\mathcal R(\widetilde{ {\mathcal F}^\prime}))}   \ell((H^2_{\mathcal R_+}(\mathcal R(\widetilde{{ {\mathcal F}^\prime}})))_0)= a_2(\mathcal R(\widetilde{ {\mathcal F}^\prime}))e_2(I).\nonumber
\end{equation}
where $a_2(\mathcal R(\widetilde{ {\mathcal F}^\prime}))\leq a_2(G(\widetilde{ {\mathcal F}^\prime})).$  By \cite[Corollary 5.7(2)]{tm}, $a_2(G(\widetilde{{\mathcal F}^\prime}))=\widetilde{r}({\mathcal F}^\prime)-2$.
Hence using \eqref{3}, we get
\begin{equation}\label{r_J(I3)}
    r_J(I)\leq e_1(I)-e_0(I)+\ell(R/I)+1+ (\widetilde{r}({\mathcal F}^\prime)-2)e_2(I)-e_3(I).\nonumber
\end{equation}
 By using Lemma \ref{4.6}, we get 
 $$
  r_J(I)\leq e_1(I)-e_0(I)+\ell (R/I)+1+e_2(I)(e_2(I)-e_1(I)+e_0(I)-\ell (R/I))-e_3(I).
 $$ 
 For $d\geq 4$, let $x\in I$ be a superficial element for $I$ such that $I^\prime = I/(x)$ is integrally closed. Then $e_i(I^\prime)=e_i(I)$ for $0\leq i \leq 3$ and $\depth G(I^\prime)\geq d-4.$ Since $\depth G(I)\geq 1$, we have $\widetilde{I^n}=I^n$ for all $n\geq 1$ which implies  $r_J(I)=r_{J^\prime}(I^\prime)$ by \cite[Lemma 3.1]{ms}.  
 Therefore, by induction, 
 \begin{align*}
 r_J(I)=r_{J^\prime}(I^\prime)&\leq e_1(I^\prime)-e_0(I^\prime)+\ell (R^\prime/I^\prime)+1+e_2(I^\prime)(e_2(I^\prime)-e_1(I^\prime)+e_0(I^\prime)-\ell (R^\prime/I^\prime))-e_3(I^\prime) \\ 
 &=e_1(I)-e_0(I)+\ell (R/I)+1+e_2(I)(e_2(I)-e_1(I)+e_0(I)-\ell (R/I))-e_3(I).\qedhere
 \end{align*}
\end{proof}
 When $\depth G(I)\geq d-3$, then by \cite[Theorem 4.1]{ms}, 
\begin{equation}\label{ms-bd}r_J(I)\leq e_1(I)-e_0(I)+\ell(R/I)+1+e_2(I)(e_2(I)-1)-e_3(I)\end{equation}
for an $\m$-primary ideal $I$. For an integrally closed ideal $I$, if $G(I)$ is not Cohen-Macaulay then $e_1(I)-e_0(I)+\ell(R/I)> 1$ which implies
 $e_2(I)-e_1(I)+e_0(I)-\ell(R/I)< e_2(I)-1.$ This shows that 
our bound  in Theorem \ref{improvment ms result} is an improvement of the above bound in \eqref{ms-bd}  when $I$ is integrally closed. The next example supports this. 
\begin{example}
In Example \ref{eg4}, let $d=3$. Then $r_Q(\m)=3$ and our bound $e_1(\m)-e_0(\m)+\ell (R/\m)+1+e_2(\m)(e_2(\m)-e_1(\m)+e_0(\m)-\ell (R/\m))-e_3(\m)=5$, where as the bound given in \cite[Theorem 4.1]{ms} is  $e_1(\m)-e_0(\m)+\ell(R/\m)+1+e_2(\m)(e_2(\m)-1)-e_3(\m)=17$.
\end{example}
We state below a case for integrally closed ideals when Rossi's bound holds.
\begin{corollary}
Let $(R, \m)$ be a Cohen-Macaulay local ring of dimension three and $I$ an $\m$-primary integrally closed ideal with $e_2(I)=e_1(I)-e_0(I)+\ell(R/I)$ and $e_3(I)\geq 0.$  Then 
$r_J(I)\leq e_1(I)-e_0(I)+\ell (R/I)+1$
for a minimal reduction $J\subseteq I$. 
\end{corollary}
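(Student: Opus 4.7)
The proof will be a direct application of Theorem \ref{improvment ms result} applied in dimension three. The plan is as follows.

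First, I would note that since $d=3$, the depth condition $\depth G(I)\geq d-3=0$ required by Theorem \ref{improvment ms result} is automatically satisfied, so that theorem applies unconditionally to our integrally closed $\m$-primary ideal $I$. This yields
\begin{equation*}
r_J(I)\leq e_1(I)-e_0(I)+\ell(R/I)+1+e_2(I)\bigl(e_2(I)-e_1(I)+e_0(I)-\ell(R/I)\bigr)-e_3(I).
\end{equation*}

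Next, I would substitute the first hypothesis $e_2(I)=e_1(I)-e_0(I)+\ell(R/I)$, which forces the bracketed factor $e_2(I)-e_1(I)+e_0(I)-\ell(R/I)$ to vanish. The middle term on the right-hand side therefore disappears, leaving
\begin{equation*}
r_J(I)\leq e_1(I)-e_0(I)+\ell(R/I)+1-e_3(I).
\end{equation*}

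Finally, invoking the second hypothesis $e_3(I)\geq 0$ gives $-e_3(I)\leq 0$, so the bound simplifies further to Rossi's bound $r_J(I)\leq e_1(I)-e_0(I)+\ell(R/I)+1$. There is no real obstacle here: the corollary is essentially a transparent specialization of Theorem \ref{improvment ms result} in which the hypotheses are engineered precisely to kill the two correction terms $e_2(I)\bigl(e_2(I)-e_1(I)+e_0(I)-\ell(R/I)\bigr)$ and $-e_3(I)$ that separate the improved bound from Rossi's bound.
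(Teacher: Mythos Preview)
Your proof is correct and is exactly the intended argument: the paper states this corollary without proof immediately after Theorem \ref{improvment ms result}, indicating it is a direct specialization in which the two extra terms vanish under the given hypotheses, precisely as you describe.
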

{\bf{IV.}} Finally we remark that, using \eqref{eq-ei} and \eqref{eq-delta}, we have
\begin{equation}
{e_3}(\mathcal{I})=\sum\limits_{n\geq3}{{n-1\choose2 }\ell ({{I_n}}/{J{I_{n - 1}}})}-\sum\limits_{i=2}^d {{( - 1)}^i} \sum_{n\geq 3} {n-1 \choose 2} \ell (H_i
 (C_.(\mathcal{I},n))).\nonumber
\end{equation}
Suppose $d=3$. Then 
\begin{align*}
e_3(\mathcal{I})&=\sum\limits_{n\geq3}{n-1\choose 2} \ell (I_n/J{I_{n-1}})-\sum\limits_{n\geq 3}{n-1 \choose 2}\ell(H_2(C_.(\mathcal{I},n)))+\sum\limits_{n\geq 3}{n-1 \choose 2}\ell(H_3(C_.(\mathcal{I},n)))\\
&\leq \sum\limits_{n\geq3}{n-1 \choose 2}\ell (I_n/JI_{n-1})+\sum\limits_{n\geq 3}{n-1 \choose 2}\ell (H_3(C_.(\mathcal{I},n))).
  \end{align*}
From the modified Koszul complex $C_.(\mathcal{I},n)$, we have $H_3(C_.(\mathcal{I},n))=(I_{n-2}:J)/I_{n-3}$. Therefore, 
\begin{align*}
   e_3(\mathcal{I})\leq \sum\limits_{n\geq3}{n-1 \choose 2}[\ell (I_n/JI_{n-1})+\ell ((I_{n-2}:J)/I_{n-3})].  
\end{align*}
We obtain the following result as a consequence of Theorem \ref{T_{11}}  when $r_J(I)\leq 3.$ Recall that for an integrally closed ideal $I$, if $r_J(I)\leq 2$ then $G(I)$ is Cohen-Macaulay and $e_3(I)=0$ (see \cite[Theorem 3.6]{cpr}). 
\begin{theorem}\label{2.11}
Let $(R,\m)$ be a Cohen-Macaulay local ring of dimension $d\geq 3$, $I$ an $\m$-primary integrally closed ideal and $J$ be a minimal reduction of $I$. If $r_J(I)\leq 3$, then
\begin{align*}
    e_3({I})\leq \ell(I^3/JI^2)+\ell((J\cap I^3)/JI^2).
\end{align*}
\end{theorem}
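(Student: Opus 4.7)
The plan is to start from the inequality
\begin{align*}
e_3(I) \leq \sum_{n\geq 3}\binom{n-1}{2}\ell(I^n/JI^{n-1}) + \sum_{n\geq 3}\binom{n-1}{2}\ell\bigl((I^{n-2}:J)/I^{n-3}\bigr),
\end{align*}
derived in Part IV above (valid when $d = 3$) by discarding the non-positive contribution $-\sum\binom{n-1}{2}\ell(H_2(C_\bullet(I,n)))$ from the Euler-characteristic expression for $e_3(I)$ coming from the modified Koszul complex $C_\bullet(I, n)$. The hypothesis $r_J(I) \leq 3$ gives $I^n = JI^{n-1}$ for every $n \geq 4$, so the first sum collapses to the single term $\ell(I^3/JI^2)$; the $n = 3$ term of the second sum vanishes because $J \subseteq I$ forces $(I:J) = R$. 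The heart of the proof is therefore to establish
\begin{align*}
\sum_{n\geq 4}\binom{n-1}{2}\ell\bigl((I^{n-2}:J)/I^{n-3}\bigr) \leq \ell\bigl((J \cap I^3)/JI^2\bigr)
\end{align*}
in the base case $d = 3$.

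To carry out this estimate, I plan to exploit the long exact sequence
\begin{align*}
\cdots \to H_i(C'_\bullet(I,n)) \to H_i(C_\bullet(I,n)) \to H_{i-1}(C'_\bullet(I,n-1)) \to \cdots
\end{align*}
obtained by stripping one generator off the modified Koszul complex on $J$, so as to relate $H_3(C_\bullet(I,n)) = (I^{n-2}:J)/I^{n-3}$ to the Koszul-homological subquotients $(J \cap I^m)/JI^{m-1}$ (which appear as $H_1$-modules of the truncated modified Koszul complex in a Cohen--Macaulay ring where $J$ is a regular sequence). Combined with a Valabrega--Valla-type vanishing $(J \cap I^m)/JI^{m-1} = 0$ for $m \geq 4$---which I expect to hold under $r_J(I) \leq 3$ and $I$ integrally closed, by a Ratliff--Rush-type argument propagating $I^4 = JI^3$ through subsequent powers---the tail contracts to the single contribution $\ell((J \cap I^3)/JI^2)$. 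For $d \geq 4$, I induct on the dimension by cutting with a superficial element $x \in I$ for which $I/(x)$ is integrally closed (Itoh's lemma, used elsewhere in this paper); this preserves $r_{J'}(I') \leq r_J(I) \leq 3$, and the invariants $e_3$, $\ell(I^3/JI^2)$, $\ell((J\cap I^3)/JI^2)$ descend compatibly using Proposition~1.2 of \cite{rv} and standard exact sequences modulo $(x)$.

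I anticipate the main obstacle to be the tail estimate in the $d = 3$ base case: both the Koszul-type identification linking $H_3(C_\bullet(I,n))$ to a shifted piece of $(J \cap I^m)/JI^{m-1}$, and the subsequent vanishing of this quotient for $m \geq 4$, genuinely require the combined power of the integrally closed hypothesis on $I$ and the bound $r_J(I) \leq 3$. Making both steps explicit is the real technical work of the proof; the reduction to $d = 3$ by cutting with a superficial element is comparatively routine and mirrors arguments already used in Theorems \ref{T_{11}} and \ref{improvment ms result}.
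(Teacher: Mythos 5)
Your reductions of the first sum and of the $n=3$ term of the second sum are fine, but the step you yourself call the heart of the proof --- the inequality $\sum_{n\geq 4}\binom{n-1}{2}\ell\bigl((I^{n-2}:J)/I^{n-3}\bigr)\leq\ell\bigl((J\cap I^3)/JI^2\bigr)$ --- is never established, and it is exactly where the content of the theorem would lie in your approach; as written the proposal is a plan with the crucial estimate left as a conjecture. Moreover, the mechanism you sketch is unlikely to deliver it. The ``Valabrega--Valla-type vanishing'' $(J\cap I^m)/JI^{m-1}=0$ for $m\geq 4$ needs neither integral closedness nor any Ratliff--Rush argument: $r_J(I)\leq 3$ alone gives $I^m=JI^{m-1}\subseteq J$, hence $J\cap I^m=JI^{m-1}$, so this part carries no information. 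The genuine difficulty is the colon modules $(I^{m}:J)/I^{m-1}$, $m\geq 2$: for a superficial $x\in J$ one has $(I^m:J)\subseteq(I^m:x)\subseteq\widetilde{I^{m-1}}$, so these are Ratliff--Rush-type correction terms which can be nonzero for many $m$, and nothing in the hypotheses ties their weighted total length to the single length $\ell((J\cap I^3)/JI^2)$. In the Euler-characteristic identity the $H_3$-terms are balanced against the $H_2$-terms you discard, so after dropping $H_2$ the long exact sequence relating $C_\bullet$ and $C'_\bullet$ gives no evident way back to a bound of the claimed strength; the integral closedness hypothesis (via Itoh's $J\cap I^2=JI$ and a Koszul-syzygy argument) yields only $(I^2:J)=I$, i.e.\ it kills the $n=4$ term, not the whole tail. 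The $d\geq 4$ reduction is also not as routine as stated: modulo $x\in J$ there is a surjection $I^3/JI^2\rightarrow I'^3/J'I'^2$, but $J'\cap I'^3=\bigl(J\cap(I^3+(x))\bigr)/(x)$ may strictly contain the image of $J\cap I^3$, so $\ell((J'\cap I'^3)/J'I'^2)\leq\ell((J\cap I^3)/JI^2)$ requires proof.

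For comparison, the paper proves the statement in a few lines, uniformly in $d\geq 3$ and without the Koszul tail: since $r_J(I)\leq 3$, Theorem \ref{T_{11}} gives $e_3(I)\leq e_2(I)-e_1(I)+e_0(I)-\ell(R/I)$; combining Huckaba--Marley's bound $e_1(I)\geq\sum_{n\geq 0}\ell(I^{n+1}/(J\cap I^{n+1}))$ with Corso--Polini--Rossi's bound $e_2(I)\leq\sum_{n\geq 1}n\,\ell(I^{n+1}/JI^n)$ bounds $e_2-e_1+e_0-\ell(R/I)$ by $\sum_{n\geq1}n\,\ell(I^{n+1}/JI^n)-\sum_{n\geq1}\ell(I^{n+1}/(J\cap I^{n+1}))$, where both sums stop at $n=2$ because $r_J(I)\leq 3$; Itoh's equality $J\cap I^2=JI$ cancels the $n=1$ contributions, leaving $2\ell(I^3/JI^2)-\ell(I^3/(J\cap I^3))=\ell(I^3/JI^2)+\ell((J\cap I^3)/JI^2)$. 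If you wish to salvage your route, you must actually prove the tail estimate (or find a weaker substitute that still suffices); otherwise I recommend restructuring the argument along the lines above.
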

\begin{proof}
 We have  $e_1(I)\geq \sum\limits_{n\geq 0}\ell(I^{n+1}/(J\cap I^{n+1}))=e_0(I)-\ell(R/I)+\sum\limits_{n\geq 1}\ell(I^{n+1}/(J\cap I^{n+1}))$ by \cite[Theorem 4.7(a)]{hm} and  $e_2(I)\leq \sum\limits_{n\geq 1}n\ell (I^{n+1}/JI^n)$ by \cite[Theorem 3.1]{cpr}. This gives, 
   $$e_2(I)-e_1(I)+e_0(I)-\ell(R/I)\leq \sum\limits_{n\geq 1}n\ell (I^{n+1}/JI^n)-\sum\limits_{n\geq  1}\ell(I^{n+1}/(J\cap I^{n+1})).$$
  Since $I$ is integrally closed, we have   $(I^2\cap J)=JI$. Now by Theorem \ref{T_{11}}, we get
 \begin{align*}
e_3(I)&\leq e_2(I)-e_1(I)+e_0(I)-\ell (R/I)\\
&\leq \ell(I^2/JI)+2\ell(I^3/JI^2)-\ell(I^2/(J\cap I^2))-\ell(I^3/(J\cap I^3))\\
&=\ell(I^3/JI^2)+\ell((J\cap I^3)/JI^2).\qedhere
 \end{align*}  
\end{proof}

\end{document}